\newcommand{\R}{\mathbb{R}}
\newcommand{\N}{\mathbb{N}}
\newcommand{\C}{\mathbb{C}}
\newcommand{\Z}{\mathbb{Z}}
\newcommand{\CC}{\mathcal{C}}
\newcommand{\F}{\mathcal{F}}
\newcommand{\A}{\mathcal{A}}
\newcommand{\T}{\mathcal{T}}
\newcommand{\calH}{\mathcal{H}}
\newcommand{\calP}{\mathcal{P}}
\newcommand{\calC}{\mathcal{C}}
\newcommand{\calD}{\mathcal{D}}
\newcommand{\calE}{\mathcal{E}}
\newcommand{\calK}{\mathcal{K}}
\newcommand{\calQ}{\mathcal{Q}}
\newcommand{\calV}{\mathcal{V}}
\newcommand{\calN}{\mathcal{N}}
\newcommand{\X}{\mathcal{X}}
\newcommand{\w}{\omega}
\newcommand{\la}{\lambda}
\newcommand{\g}{\gamma}
\newcommand{\W}{\Omega}
\newcommand{\wh}[1]{\widehat{#1}}
\newcommand{\wt}[1]{\widetilde{#1}}
\newtheorem{theorem}{Theorem}[section]
\newtheorem{lemma}[theorem]{Lemma}
\theoremstyle{definition}
\newtheorem{definition}[theorem]{Definition}
\newtheorem{corollary}[theorem]{Corollary}
\newtheorem{proposition}[theorem]{Proposition}
\theoremstyle{remark}
\newtheorem{remark}[theorem]{Remark}
\numberwithin{equation}{section}
\begin{document}

\title[An Approximation Problem in Multiplicatively Invariant Spaces]{An Approximation Problem in Multiplicatively Invariant Spaces}


\author{C. Cabrelli}
\address{Departamento de Matem\'atica,
Facultad de Ciencias Exac\-tas y Naturales,
Universidad de Buenos Aires, Ciudad Universitaria, Pabell\'on I,
1428 Buenos Aires, Argentina and
IMAS-CONICET, Consejo Nacional de Investigaciones
Cient\'ificas y T\'ecnicas, Argentina}
\email{cabrelli@dm.uba.ar}

\author{C. A. Mosquera}
\address{Departamento de Matem\'atica,
Facultad de Ciencias Exac\-tas y Naturales,
Universidad de Buenos Aires, Ciudad Universitaria, Pabell\'on I,
1428 Buenos Aires, Argentina and
IMAS-CONICET, Consejo Nacional de Investigaciones
Cient\'ificas y T\'ecnicas, Argentina}
\email{mosquera@dm.uba.ar}

\author{V. Paternostro}
\address{Departamento de Matem\'atica,
Facultad de Ciencias Exac\-tas y Naturales,
Universidad de Buenos Aires, Ciudad Universitaria, Pabell\'on I,
1428 Buenos Aires, Argentina and
IMAS-CONICET, Consejo Nacional de Investigaciones
Cient\'ificas y T\'ecnicas, Argentina}
\email{vpater@dm.uba.ar}
\thanks{The three authors were supported in part by Grants: CONICET PIP 11220110101018, PICT 2011-436, 
UBACyT 20020130100403BA, UBACyT 20020130100422BA and the third author was also supported by a Return Fellowship for Postdoctoral Researcher of the Alexander von Humboldt Foundation}

\subjclass[2000]{Primary 42C40, 94A12 ; Secondary 42C15, 47A15, 43A70}


\keywords{Shift-invariant spaces, Extra invariance, Multiplicatively invariant spaces, Approximation}

\begin{abstract}
Let $\calH$ be Hilbert space  and $(\W,m)$ a $\sigma$-finite measure space. Multiplicatively invariant (MI) spaces are closed subspaces of $ L^2(\Omega, \calH)$
that are invariant under point-wise multiplication by  functions from a fixed subset of $L^{\infty}(\Omega).$
Given a finite set of data $\mathcal{F}\subseteq L^2(\Omega, \calH),$ in this paper we prove the existence and construct  an MI space $M$ that best fits $\mathcal{F}$, in the least squares sense. 
MI spaces are related to  shift-invariant (SI) spaces via a fiberization map, which allows us to solve an approximation problem for SI spaces in the context of locally compact abelian groups.
On the other hand, we introduce the notion of decomposable MI spaces (MI spaces that can be decomposed into an orthogonal sum of MI subspaces)
and solve the approximation problem for the class of these spaces. Since  SI spaces
having extra invariance are in one-to-one relation to decomposable MI spaces, we  also solve our approximation problem for this class of SI spaces.
Finally we prove that  translation-invariant spaces  are in correspondence with
totally decomposable MI spaces.
\end{abstract}
\maketitle


\section{Introduction}
It is a common problem in applications to try to find a simple model to represent  a set of data $\mathcal{F} = \{f_1,...,f_m\}$ that is supposed to be included in some
Hilbert space $\mathcal{H}$. This becomes very important in particular when $m$ is very  large. 

Although the data are usually assumed to come from a low dimensional subspace, the low dimensionality is often lost because  the data  have been corrupted by noise. Therefore, one  wants to find a low dimensional subspace that best fits the data $\F.$

Usually some a priori assumptions are made about the data. For example that they are ``band-limited", that is,  they belong to a Paley-Wiener
space. Then one can use the Whittaker-Nyquist-Kotelnikov-Shannon Sampling Theorem. This is a common practice in engineering, in particular in applications to signal
processing.  However there are applications where these assumptions are not realistic.

There is another approach to the problem. Instead of fixing a model  based on an a priori hypothesis on the data, we can use the data to obtain
a  model (subspace) that is, in some sense,  the most suitable for them. This optimal model is  chosen  from a ``nice" class $\mathcal{C}$ of subspaces.

So a  mathematical description of this problem is: 
Given a Hilbert space $\calH$ and  a class $\mathcal{C}$ of closed subspaces of $\calH,$ try to prove the existence and find for every finite set $\F \subseteq \calH$ an element  $S^*\in \calC$
such that $\mathcal{E}(\F,S^*)\leq \mathcal{E}(\F,S)$ for all $S \in \mathcal{C}.$ Here $\mathcal{E}$ is some conveniently chosen functional.

This problem has been studied in different scenarios. The general problem of {\it existence} was studied in the abstract setting
by Aldroubi and Tessera in \cite{AT11}. They found necessary and sufficient conditions on a class $\CC$ of subspaces in some Hilbert space $\calH$
for the existence of an optimal solution
for the functional  $\mathcal{E}(\F,S) = \sum_{j=1}^m \|f_j-P_Sf_j\|^2.$ However they did not provide a way to construct it. See also \cite{CL15} for a version in Banach spaces.

It is important for applications to have a description of the solutions and to have an estimation of the error, that is the value of $\mathcal{E}(\F,S^*)$ for  $S^*$ being the  optimal subspace.

When $\calH$ is the Euclidean space $\R^d$ and  $\CC$ is the class of  subspaces of dimension less or equal than some 
positive integer $\ell,$ the problem can be tackled by an application of the Eckard-Young Theorem using the Singular Value Decomposition of a matrix, constructed from the data and was solved in \cite{EY36}. See also
\cite{Sch07}.

A case that is mathematically very challenging is when $\calH$ is the Lebesgue space $L^2(\R^d)$ and   $\calC$ is the class of  {\it shift-invariant} (SI) spaces with few generators.
Let $\Gamma$ be a full lattice in $\R^d$. A closed subspace $V\subseteq L^2(\R^d)$ is said to be {\it shift invariant} with respect to the lattice $\Gamma$ if $T_\gamma f \in V$ for every
$f \in V$ and every $\gamma \in \Gamma.$ Here $T_{\gamma}$ is the translation operator by $\gamma$.
The class of SI spaces has been studied in depth and they have proved to be very useful in many applications such as wavelets, sampling, approximation theory, harmonic analysis and signal processing. (See, e.g., \cite{AG01, Gro01, HW96, Mal89} and references therein).

The problem of approximation mentioned above was solved completely for the case of SI spaces in  $L^2(\R^d)$ in \cite{ACHM07}.
The authors constructed the generators of an optimal subspace and provided a formula that gives exactly the error of approximation in terms
of the spectrum of an associated matrix of periodic functions.
The key element in the argument is to transform the SI space into a {\it multiplicatively invariant} (MI) space using the Fourier transform composed with another isometry,
that  is called {\it the fiberization map}.
In a nutshell, if $\calH$ is a Hilbert space and $(\Omega, m)$ is a $\sigma$-finite measure space, a {\it multiplicatively invariant} space with respect to $D$, is a closed subspace of 
 $L^2(\Omega,\calH)$ that is invariant by point-wise multiplication by functions in $D$.  Here the set $D$ is a subset of  $L^{\infty}(\Omega,\mu)$ satisfying a certain property 
(see Section \ref{sec:MI-space} for the precise definition).

MI spaces have been studied and characterized by Helson \cite{Hel64, Hel86}, Srinivasan \cite{Sri64} and Hasumi-Srinivasan \cite{HS64} in the sixties as a generalization
    of Wiener's Theorem. (See Proposition \ref{prop:wiener}). The characterization is in terms of {\it range functions,} a concept introduced by Helson at that time.(See \cite[Chapter I, $\mathsection{3},$ Thm. 1]{Hel86}).
They were called {\it doubly invariant spaces}. 
Taking advantage of these results  Bownik and Ross \cite{BR14} gave a characterization for  subspaces invariant under translations along a subgroup which is not assumed to be discrete.

In \cite{dBDVRI94}, the authors used this characterization to establish the fiberization map that associates to every SI space  an MI space 
defined by the corresponding range function (see also \cite{RS95}). They studied the existence of bases and quasi-bases of translates in 
finitely generated SI spaces,  i.e. SI spaces  $V$ for which  there exists a finite set of functions $\A$ in $V$ with $V=\overline{\mbox{span}}\{T_{\gamma}\varphi:  \varphi \in \A,\;  \gamma \in \Gamma\}$.
Bownik in \cite{Bow00} used this isometric isomorphism  to characterize frames and Riesz bases of SI spaces.

This strategy of studying questions about SI spaces in the associated MI spaces and its range functions, is called {\it fiberization } and has been very fruitful,
in particular in the approximation problem mentioned above.

In this paper we  study this approximation problem, in  general MI spaces. 
  More precisely, given a Hilbert space $\calH$ and a finite set of functions $\mathcal{F}$ in  $L^2(\Omega,\calH)$, we want to 
 find an MI subspace of $L^2(\Omega,\calH)$ in a certain class that best fits the functions in $\mathcal{F}$.
 
The reason to choose this setting is because not only can one  derive   results for   SI spaces, but also MI spaces are much more general
and the problem can be important in  other situations.
For example, if instead  of SI spaces we consider spaces invariant under a group of unitary operators coming from the action of a discrete abelian group
in a general measure space, these spaces can be mapped isomorphically and isometrically to  MI spaces through the generalized Zak transform (see \cite{BHP15}). Also, MI spaces and their range functions are relevant in multiplicity theory  for normal operators (see \cite{Hel86}).

On the other side,  SI and MI spaces and the study of their structure using range functions extend naturally to locally compact abelian (LCA) groups, \cite{CP10}, 
(see also \cite{BR14} for more general lattices). Therefore with our approximation results on MI spaces  we shall, on one hand, obtain the existing approximation results on $L^2(\R^d)$ (\cite{ACHM07, CM15}) and on the other,  their generalization to the LCA group setting. 

An interesting and useful instance of the approximation problem is when one considers the elements of the approximating class to be SI spaces with {\it extra invariance}.
Let $V$ be an SI space in $L^2(\R^d)$ with respect to a full lattice $\Gamma$ and assume that $\Lambda$ is another full lattice
containing $\Gamma.$ The SI space $V$ is then said to have {\it $\Lambda$-extra invariance, }  if $T_\lambda f\in V$ for every $f\in V$ and every $\lambda\in\Lambda$.
These  spaces with extra invariance have been completely characterized, first in the real line  \cite{ACHKM10}, then in $\R^d$ \cite{ACP10} and more generally in LCA groups \cite{ACP11}.  See also \cite{SW11} for an application to sampling.

Recently, the approximation problem for the class of SI spaces with extra invariance was solved in  \cite{AKTW12},
for  principal shift-invariant spaces in one variable and under the assumption that the space has  one generator with orthogonal integer translates.
The general case for frame generators in several variables was solved in \cite{CM15}.

The extra invariance of an  SI space is reflected into the associated MI space by the fact that 
the MI space can be decomposed into an  orthogonal  sum of MI subspaces. This decomposition comes from  a decomposition of the Hilbert space $\calH$ in which the functions of the MI space take values.
This property suggest  to define the class of {\it decomposable MI spaces}.
So, to each SI space with extra invariance there corresponds a particular decomposable MI space.
However, decomposable MI spaces is a richer class and for general $\Omega$ and $\calH$  may not be associated with an SI space.

 Once we introduce this new concept of decomposable MI spaces, we study some of their properties and give a characterization of them in terms of their associated range functions.
Consequently, in this paper we also consider the approximation problem for decomposable  MI subspaces of $L^2(\Omega,\calH)$. Furthermore, we establish a connection between {\it totally} decomposable MI spaces and SI spaces having total extra invariance, that is, SI spaces invariant under every translation.

The paper is organized as follows. In Section \ref{preliminaries} we provide the general setting of MI spaces and the definitions we shall use in the following sections. In Section \ref{sec:extra-MI-spaces} we introduce the concept of decomposable MI spaces, study some of their properties and give a characterization in terms of their associated range function. The approximation problems are treated in Section \ref{sec:approximation-problems}. First we prove the existence of an optimal MI space in Subsection \ref{sec:optimal-MI}. In Subsection \ref{sec:approx-on-H} we study an abstract version of the problem in general Hilbert spaces and in Subsection \ref{sec:optimal-decomposable-MI} we consider the problem  for decomposable MI spaces. 
The connection to the setting of shift-invariant spaces on LCA groups is provided in Section \ref{sec:shift-invariant}.
Finally, in Section \ref{sec:PW-MI-spaces} we analyze the connection between those MI spaces that are totally decomposable with SI spaces having total extra invariance.

\bigskip

\section{Preliminaries}\label{preliminaries}

We begin with a review of some basic results and definitions that we will use in subsequent sections. 
The known results are generally stated without proofs, but we provide references where the proofs can be found. 

\subsection{Multiplicatively invariant spaces in $L^2(\Omega, \mathcal{H})$} \label{sec:MI-space}
\ 

\medskip
This section summarizes all  we need about multiplicatively invariant spaces and it is  extracted from \cite[Section 2]{BR14}, where the reader can find the proofs of the results. See also \cite{Hel64, Sri64, HS64}. 

Let $(\Omega, \mu)$ be a $\sigma-$finite measure space and $\mathcal{H}$ be a separable Hilbert space.  
The vector-valued space $L^2(\Omega, \mathcal{H})$ is defined as
\[
L^2(\Omega, \mathcal{H}):=\{\Phi\colon\Omega\to\mathcal{H}\colon \Phi \mbox{ is measurable, } \|\Phi\|^2= \int_{\Omega} \|\Phi(\w)\|_{\mathcal{H}}^2 \, d\mu(\w) <\infty\}.
\]
It is a Hilbert space with  inner product  given by 
\[
\langle\Phi, \Psi\rangle=\int_{\Omega}\langle\Phi(\w), \Psi(\w)\rangle_{\mathcal{H}}  \, d\mu(\w).
\]
In order to state  the definition of multiplicatively invariant space (MI spaces) in $L^2(\Omega, \mathcal{H}),$  we need to recall  the notion of determining set for 
$L^1(\Omega)$, introduced in \cite{BR14}.

\begin{definition}\label{def:determining-set}
A set $D\subseteq L^{\infty}(\Omega)$ is a {\it determining set for $L^1(\Omega)$} if for every $f\in L^1(\Omega)$ such that $\int_{\Omega} fg\, d\mu=0$ for all $g\in D,$ one has $f=0.$
\end{definition} 

\begin{definition}\label{def:MI}
A closed subspace $M\subseteq L^2(\Omega, \mathcal{H})$ is {\it multiplicatively invariant} with respect to the determining set $D$ 
for $L^1(\Omega)$ (briefly, $D$-MI) if 
\[
\Phi\in M \Longrightarrow g\Phi\in M, \quad \mbox{ for any }g\in D.
\] 
\end{definition}

For an at most  countable subset $\A\subseteq L^2(\Omega, \mathcal{H}),$ we define the multiplicatively invariant space with respect to $D$ generated
by $\A$ as
\[
M_D(\A):=\overline{\mbox{span}}\{g\Phi\colon \Phi\in\A, g\in D\}.
\]
When $\A$ is finite, $M_D(\A)$ is said to be finitely generated by $\A.$ In general, we shall omit to make reference to the underlying determining set and  simply say ``MI spaces''.    For a finitely generated MI space $M$, we define its length as 
\[
\ell(M):=\min \{n\in\N\colon \exists \,\Phi_1,\dots, \Phi_n\in M, \mbox{ with } M=M_D(\Phi_1,\dots,\Phi_n)\}.
\] 
 
One of the most important properties of MI spaces is their characterization through  range functions, which we state in the next theorem (see \cite[Theorem 2.4]{BR14} and \cite[Chapter I, $\mathsection{3}$, Thm. 1]{Hel86}.  First, recall that a {\it range function} is a mapping $J\colon \Omega\to \{\mbox{closed subspaces of }\mathcal{H}\}.$ For $\w\in\Omega,$ the orthogonal projection of $\mathcal{H}$ onto $J(\w)$ is denoted by 
$P_{J(\w)}.$  A range function is {\it measurable} if for every $a, b\in\mathcal{H},$ $\w\mapsto \langle P_{J(\w)}(a),b\rangle$ is measurable
as a function from $\Omega$ to $\C.$

\begin{theorem}\label{thm:MI-range-function}
Suppose that $L^2(\Omega)$ is separable, so that $L^2(\Omega, \mathcal{H})$ is also separable. Let $M$ be a closed subspace of $L^2(\Omega, \mathcal{H})$
and $D$ a determining set for $L^1(\Omega).$ Then, $M$ is a $D$-MI space if and only if there exists a measurable range function $J$ such that 
\begin{equation}\label{eq:M-J}
M= \{\Phi\in L^2(\Omega, \mathcal{H})\colon \Phi(\w)\in J(\w), \mbox{ a.e. } \w\in\Omega\}.
\end{equation}
Identifying range functions that are equal almost everywhere, the correspondence between $D$-MI spaces and measurable range functions is one-to-one and onto.

Moreover, when $M=M_D(\A)$ for some at most countable set $\A\subseteq L^2(\Omega, \mathcal{H})$ the range function associated to $M$ is
\[
J(\w)=\overline{\mbox{span} }\{\Phi(\w)\colon \Phi\in\A\}, \quad \mbox{ a.e. }\w\in\Omega.
\]  
\end{theorem}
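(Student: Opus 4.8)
The plan is to prove the two implications separately and then to settle the uniqueness. The direction stating that every measurable range function $J$ produces a $D$-MI space is the routine one: if $\Phi(\w)\in J(\w)$ a.e.\ then $g(\w)\Phi(\w)\in J(\w)$ a.e.\ for any $g\in D$, because each $J(\w)$ is a subspace, so the set on the right-hand side of \eqref{eq:M-J} is $D$-invariant; it is also closed, since an $L^2$-convergent sequence has an a.e.-convergent subsequence and each $J(\w)$ is closed. The real content of the theorem lies in the converse together with the generator formula, and I would handle both through a single orthogonality computation.

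For the converse I would exploit separability. Since $M$ is a closed subspace of the separable space $L^2(\W,\calH)$, choose a sequence $\{\Phi_n\}_{n\in\N}$ that is dense in $M$ and set $J(\w):=\overline{\mathrm{span}}\{\Phi_n(\w):n\in\N\}$; write $M_J$ for the space on the right-hand side of \eqref{eq:M-J}. The inclusion $M\subseteq M_J$ is immediate: each $\Phi\in M$ is an $L^2$-limit of the $\Phi_n$, so after passing to an a.e.-convergent subsequence, $\Phi(\w)\in J(\w)$ a.e. The reverse inclusion is where the determining-set hypothesis enters and is the conceptual core. I would first establish the key fact that $\Psi\perp M$ implies $\Psi(\w)\perp J(\w)$ a.e. Indeed, for $\Psi\in M^{\perp}$, $g\in D$ and any $\Phi_n$, the vector $g\Phi_n$ lies in $M$ because $M$ is $D$-MI, whence $0=\langle g\Phi_n,\Psi\rangle=\int_{\W} g(\w)\,f_n(\w)\,d\mu(\w)$, where $f_n(\w):=\langle\Phi_n(\w),\Psi(\w)\rangle_{\calH}$ belongs to $L^1(\W)$ by Cauchy--Schwarz. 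As this holds for all $g\in D$ and $D$ is a determining set for $L^1(\W)$, we get $f_n=0$ a.e.; taking the countable union of the exceptional null sets gives $\Psi(\w)\perp\Phi_n(\w)$ for all $n$, a.e.\ $\w$, i.e.\ $\Psi(\w)\perp J(\w)$ a.e. Now any $\Theta\in M_J$ splits as $\Theta=P_M\Theta+(\Theta-P_M\Theta)$, and the second summand lies in $M_J$ (because $P_M\Theta\in M\subseteq M_J$) while being orthogonal to $M$; by the key fact it is fiberwise orthogonal to $J(\w)$ but also lies in $J(\w)$, so it vanishes. Hence $\Theta=P_M\Theta\in M$, proving $M_J\subseteq M$ and therefore $M=M_J$.

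Two technical points remain, and I expect the measurability of $J$ to be the main obstacle. I would build $J$ by a measurable Gram--Schmidt procedure applied pointwise to $\{\Phi_n(\w)\}$, producing measurable vector fields whose values form an orthonormal basis of $J(\w)$ for a.e.\ $\w$; then $P_{J(\w)}$ is the a.e.\ sum of the corresponding rank-one measurable projections, so $\w\mapsto\langle P_{J(\w)}a,b\rangle$ is measurable for all $a,b\in\calH$. The delicate part is the a.e.\ drop in dimension of the fibers, handled by discarding, on an appropriate measurable set, each vector that becomes dependent on the previously selected ones. For uniqueness, suppose $J_1,J_2$ are measurable range functions with $M_{J_1}=M_{J_2}$ yet $J_1(\w)\neq J_2(\w)$ on a set $E$ of positive measure; then on $E$ some projection $P_{J_1(\w)}a_k$ of a vector from a fixed dense sequence $\{a_k\}\subseteq\calH$ fails to lie in $J_2(\w)$, and restricting to a subset of $E$ of finite measure and using measurability yields a function in $M_{J_1}\setminus M_{J_2}$, a contradiction. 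This gives the claimed one-to-one and onto correspondence.

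Finally, the generator formula follows from the very same computation. When $M=M_D(\A)$ for a countable $\A$, set $\tilde J(\w):=\overline{\mathrm{span}}\{\Phi(\w):\Phi\in\A\}$; the inclusion $M\subseteq M_{\tilde J}$ holds because the fibers of each $g\Phi$ with $\Phi\in\A$, $g\in D$ lie in $\tilde J(\w)$ and $M_{\tilde J}$ is closed and fiberwise defined, while the orthogonality argument above, run with $\A$ in place of the dense sequence, gives $\Psi\perp M\Rightarrow\Psi(\w)\perp\tilde J(\w)$ a.e.\ and hence $M_{\tilde J}\subseteq M$. Thus $M=M_{\tilde J}$, and the uniqueness just established forces $J=\tilde J$ a.e., which is the asserted formula.
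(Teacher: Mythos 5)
Your proposal is correct and takes essentially the same approach as the proof the paper relies on: the paper states Theorem \ref{thm:MI-range-function} without proof, deferring to \cite{BR14} (and Helson), and your argument --- a dense sequence in $M$ defining $J$, the determining-set computation giving $\Psi\perp M\Rightarrow\Psi(\w)\perp J(\w)$ a.e., the splitting $\Theta=P_M\Theta+(\Theta-P_M\Theta)$ to obtain $M_J\subseteq M$, pointwise measurable Gram--Schmidt for the measurability of $J$, and the same scheme run with $\A$ in place of the dense sequence for the generator formula --- is precisely that classical Helson/Bownik--Ross route. Note also that your central orthogonality computation establishes en passant the paper's Lemma \ref{lem:helson} (that $P_M$ acts fiberwise as $P_{J(\w)}$), which is likewise the key ingredient in the cited proof.
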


\begin{remark}
From Theorem \ref{thm:MI-range-function} it follows that $M\subseteq L^2(\W,\calH)$ is a $D$-MI space if and only if $M$ is an $L^\infty(\W)$-MI space.  Indeed, if $M$ is a $D$-MI space, by Theorem \ref{thm:MI-range-function} $M$ has the form given by \eqref{eq:M-J} and then it is easily seen that $M$ is invariant under point-wise multiplication of  any function in $L^\infty(\W)$. The converse is obvious.  
\end{remark}

The next lemma will be important in what follows. It is a version of a result of Helson \cite{Hel64}  and was proven before within other contexts \cite{Bow00, CP10}. For a proof of it we refer to \cite[Proposition 2.2]{BR14}.
\begin{lemma}\label{lem:helson}
Let $M\subseteq L^2(\W,\calH)$ be a $D$-MI space with associated measurable range function $J$. Then, for every $\Phi\in L^2(\W,\calH)$
$$(P_M\Phi)(\w)=P_{J(\w)}(\Phi(\w)),$$
for a.e. $\w\in \W$. Here $P_M$ is the orthogonal projection of $L^2(\W, \calH)$ onto $M$ and $P_{J(\w)}$ are the orthogonal projections associated to $J$. 
\end{lemma}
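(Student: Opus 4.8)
The plan is to exhibit the operator $Q\colon L^2(\W,\calH)\to L^2(\W,\calH)$ defined fiber-wise by $(Q\Phi)(\w):=P_{J(\w)}(\Phi(\w))$ and to prove that it coincides with the orthogonal projection $P_M$. By the uniqueness of the orthogonal projection onto the closed subspace $M$, it suffices to check three things: that $Q$ is well defined as a map into $L^2(\W,\calH)$, that $Q\Phi\in M$ for every $\Phi$, and that $\Phi-Q\Phi$ is orthogonal to $M$. Once these are established, $Q\Phi=P_M\Phi$ for all $\Phi$, which is exactly the asserted pointwise formula.

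First I would verify that $Q$ is well defined. Since each $P_{J(\w)}$ is an orthogonal projection, $\|P_{J(\w)}(\Phi(\w))\|_{\calH}\le \|\Phi(\w)\|_{\calH}$ for a.e. $\w$, so integrating gives $\|Q\Phi\|\le\|\Phi\|$ and $Q\Phi\in L^2(\W,\calH)$, provided $\w\mapsto P_{J(\w)}(\Phi(\w))$ is measurable. This measurability is the delicate point and is discussed below; granting it, $Q$ is a bounded linear operator. Next, the two projection conditions are checked directly at the level of fibers using the characterization of Theorem \ref{thm:MI-range-function}. Because $P_{J(\w)}$ maps $\calH$ into $J(\w)$, we have $(Q\Phi)(\w)\in J(\w)$ for a.e. $\w$, and hence $Q\Phi\in M$ by \eqref{eq:M-J}. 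For the orthogonality, take any $\Psi\in M$, so that $\Psi(\w)\in J(\w)$ a.e. Since $\Phi(\w)-P_{J(\w)}(\Phi(\w))\in J(\w)^\perp$ while $\Psi(\w)\in J(\w)$, the integrand in
\[
\langle \Phi-Q\Phi,\Psi\rangle=\int_{\W}\langle \Phi(\w)-P_{J(\w)}(\Phi(\w)),\Psi(\w)\rangle_{\calH}\,d\mu(\w)
\]
vanishes for a.e. $\w$, giving $\langle \Phi-Q\Phi,\Psi\rangle=0$. This proves $Q=P_M$.

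The \emph{main obstacle} is the measurability of $\w\mapsto P_{J(\w)}(\Phi(\w))$. The idea is to reduce to simple functions: for a constant vector $a\in\calH$, the map $\w\mapsto P_{J(\w)}(a)$ is weakly measurable by the very definition of a measurable range function (the functions $\w\mapsto\langle P_{J(\w)}(a),b\rangle$ are measurable for every $b\in\calH$), hence strongly measurable by the Pettis measurability theorem since $\calH$ is separable. This handles $\calH$-valued simple functions $\Phi=\sum_i \chi_{E_i}a_i$. For a general $\Phi$, I would choose simple functions $\Phi_n\to\Phi$ in $L^2(\W,\calH)$ and pass to a subsequence converging pointwise to $\Phi$ a.e.; by continuity of each $P_{J(\w)}$ one gets $P_{J(\w)}(\Phi_n(\w))\to P_{J(\w)}(\Phi(\w))$ a.e., exhibiting $\w\mapsto P_{J(\w)}(\Phi(\w))$ as an a.e. pointwise limit of measurable functions, hence measurable. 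With this measurability in place, the verification of the three conditions above closes the argument.
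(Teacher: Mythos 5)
Your proposal is correct: the fiberwise operator $Q$ is well defined, maps into $M$ by \eqref{eq:M-J}, satisfies $\Phi-Q\Phi\perp M$ because the integrand $\langle \Phi(\w)-P_{J(\w)}(\Phi(\w)),\Psi(\w)\rangle_{\calH}$ vanishes a.e., and hence equals $P_M$ by uniqueness of the orthogonal projection, while your measurability argument (Pettis for $\w\mapsto P_{J(\w)}a$ using separability of $\calH$, then simple functions) is sound — the only simplification available is that strong measurability of $\Phi$ already furnishes simple functions converging to it pointwise a.e., making the $L^2$-convergence-plus-subsequence detour unnecessary. The paper itself gives no proof of this lemma, deferring to \cite{BR14}, and your argument is essentially the standard one found in that reference (going back to Helson and to Bownik's earlier work), so there is nothing substantive to contrast.
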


\subsection{Frames and uniform frames}
We recall that for a separable Hilbert space $\calH$ and an at most countable index set $I$, we say that $\{f_i\}_{i\in I}\subseteq \calH$ is a {\it frame for $\calH$} if there there exist constants $0<A\leq B$ such that 
\begin{equation}\label{eq:def-frame}
 A\|f\|^2_\calH\leq\sum_{i\in I}|\langle f, f_i\rangle_\calH|^2\leq B\|f\|^2_\calH,
\end{equation}
for all $f\in \calH$. The constants $A$ and $B$ are called {\it frame bounds}. When \eqref{eq:def-frame} holds for all $f\in \overline{\mbox{span} }\{f_i:\,i\in I\}$, we say that $\{f_i\}_{i\in I}$ is a {\it frame sequence}. When $A=B=1$, $\{f_i\}_{i\in I}$ is said to be a Parseval frame (or frame sequence). For details on the theory of frames in Hilbert spaces we refer the reader to \cite{Cas00, Chr03, Hei11} and the references therein.

When working with MI spaces, we will consider uniform frames, which are defined as follows:
\begin{definition}{\cite[Definition 3.5]{Pat15}}
 Let $\A\subseteq L^2(\Omega, \mathcal{H})$ be an at most countable set  and let $J$ be the  measurable range function  defined as $J(\w)=\overline{\mbox{span} }\{\Phi(\w):\Phi\in\A\}, \quad \textrm{a.e. }\w\in\W$. We say that $\A$ is a {\it uniform frame} for $J$ if there exist constants $0<A\leq B$ such that, for a.e. $\w\in\W$,  the set $\{\Phi(\w):\,\Phi\in\A\}$ is a frame for $J(\w)$ with frame bounds $A$ and $B$.
\end{definition}

\section{Decomposable MI spaces in $L^2(\Omega, \mathcal{H})$} \label{sec:extra-MI-spaces}
In this section we will introduce the notion of decomposable MI spaces. For this, fix  $D$  a determining set for $L^1(\Omega)$ and for a given $\kappa\in\N\cup\{+\infty\}$,  suppose that $\calH$ can be decomposed into an  orthogonal sum as 
\begin{equation}\label{eq:descomposicion}
\mathcal{H}=\mathcal{H}_1\oplus\cdots\oplus \mathcal{H}_{\kappa}.
\end{equation}
For each $D$-MI subspace $M,$ we define 
\[
M_j:= \{\calP_j(\Phi)\colon \Phi\in M\}, \mbox{ for all } 1\le j\le \kappa,
\]
where $\calP_j(\Phi)$ denotes the function in $L^2(\Omega, \mathcal{H})$ defined as $\calP_j(\Phi)(\w):=P_{\calH_j}(\Phi(\w))$  with  $P_{\mathcal{H}_j}\colon \mathcal{H}\to\mathcal{H}_j$ being  the orthogonal projection of $\calH$ onto $\calH_j$. Note that $\calP_j(\Phi)$ is indeed an element in
$L^2(\Omega, \mathcal{H})$ since it is measurable because  $\w\mapsto\langle \calP_j(\Phi)(\w), a\rangle=\langle \Phi(\w), P_{\mathcal{H}_j} a\rangle$ is measurable in the usual sense for every $a\in\calH$ and $\|\calP_j(\Phi)\|\leq \|\Phi\|$.
Moreover, it is easily seen that $\calP_j$ is the orthogonal projection of $L^2(\Omega, \mathcal{H})$ onto the subspace $\{\Phi\in L^2(\Omega, \mathcal{H})\,:\, \Phi(\w)\in \calH_j, \textrm{ a.e. }\w\in\W\}$, which we identify with $L^2(\Omega, \mathcal{H}_j)$. Note that, with this identification, we have $L^2(\Omega, \mathcal{H})=L^2(\Omega, \mathcal{H}_1)\oplus\cdots\oplus L^2(\Omega, \mathcal{H}_\kappa)$.

\begin{definition}\label{def:decomposable-MI}
 We say that a $D$-MI subspace $M\subseteq L^2(\Omega, \mathcal{H})$ is {\it decomposable} with respect to  $\{\calH_1, \ldots, \calH_\kappa\}$ if 
\[
M_j\subseteq M \quad \mbox{ for all } 1\le j\le\kappa. 
\]
For the particular case when the decomposition of $\calH$ is given by an orthonormal basis $\{\delta_j\}_{j\in\N}$, i.e. $\calH_j=\mbox{span}\{\delta_j\}$ we say that $M$ is {\it totally} decomposable with respect to $\{\mbox{span}\{\delta_j\}\}_{j\in\N}$. 
\end{definition}

Note that, when $M$ is decomposable with respect to $\{\calH_1, \ldots, \calH_\kappa\}$, we have that  $M$ is the orthogonal sum $M=M_1\oplus\cdots\oplus M_{\kappa}$.

\begin{remark}\noindent
\begin{enumerate}
 \item 
Given any closed subspace $\calK\subseteq \calH$ there exists a MI space $M$ which is decomposable with respect to  any  decomposition of $\calH$ as \eqref{eq:descomposicion} having $\calK$ as one of its components, that is, with $\calK=\calH_j$ for some $j$. Indeed, let $A$ be a measurable subset of $\W$ with $0<\mu(A)<+\infty$ and $x\in \calK$, $x\neq0$. Define $\Phi=x\chi_A$ where $\chi_A$ denotes the characteristic function of $A$ and consider $M=M_D(\Phi)$. Then, since $\calP_j(\Phi)=\Phi$ and $\calP_r(\Phi)=0$ for $r\neq j$, it follows that $M$ is decomposable with respect to $\{\calH_1, \ldots, \calH_\kappa\}$, where $\mathcal{H}=\mathcal{H}_1\oplus\cdots\oplus \mathcal{H}_{\kappa}$ and $\calK=\calH_j$ for some $j$.
 \item This definition of decomposable MI spaces is inspired in the definition of extra invariance of shift-invariant spaces, \cite{ACHKM10, ACP10, ACP11}. As we shall see in detail in Section \ref{sec:shift-invariant}, shift-invariant spaces are in one-to-one correspondence with MI spaces for a proper choice of $\W$ and $\calH$. Under this correspondence, shift-invariant spaces having extra invariance correspond to decomposable MI spaces with respect to a particular decomposition of $\calH.$    
\end{enumerate}
\end{remark}

In the next proposition we summarize other important properties of decomposable  MI spaces. In particular, we shall see that given a decomposable MI space $M$ the spaces $M_j$ of the previous definition are also multiplicatively invariant and show how their associated range function can be related to that of  $M$.

\begin{proposition}\label{prop:extra-invariance}
 Let $M\subseteq L^2(\Omega, \mathcal{H})$ be a $D$-MI space with associated range function $J$. Suppose that $\calH$ has a decomposition as in \eqref{eq:descomposicion} and that $M$ is decomposable with respect to  $\{\calH_1, \ldots, \calH_\kappa\}$. Then, for every $1\leq j\leq\kappa$ the following conditions hold:
 \begin{enumerate}
  \item [(i)] $M_j$ is a $D$-MI space.
  \item [(ii)] If $M=M_D(\A)$ for some at most countable set $\A\subseteq L^2(\Omega, \mathcal{H})$, then $M_j=M_D(\calP_j(\A))$.
  \item [(iii)] The measurable range function $J_j$ defined by $J_j(\w):=P_{\calH_j}(J(\w))$ is the one associated to $M_j$ through Theorem \ref{thm:MI-range-function}.
 \end{enumerate}
\end{proposition}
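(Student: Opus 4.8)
The plan is to establish the three items in order, relying throughout on two simple facts. First, since $P_{\calH_j}$ is linear and $g(\w)$ is a scalar, one has $\calP_j(g\Phi)=g\,\calP_j(\Phi)$ for every $g\in D$ and every $\Phi\in L^2(\Omega,\calH)$. Second, decomposability can be recast as the identity $M_j=M\cap L^2(\Omega,\calH_j)$: the inclusion $M_j\subseteq M\cap L^2(\Omega,\calH_j)$ holds because $M_j\subseteq M$ by hypothesis and $M_j\subseteq L^2(\Omega,\calH_j)$ by construction, while conversely any $\Phi\in M\cap L^2(\Omega,\calH_j)$ satisfies $\calP_j(\Phi)=\Phi$, hence $\Phi\in M_j$. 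I will use this reformulation repeatedly.

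For (i), closedness of $M_j$ is immediate from $M_j=M\cap L^2(\Omega,\calH_j)$, an intersection of two closed subspaces. For $D$-invariance, take $\Psi\in M_j$ and $g\in D$; then $g\Psi\in M$ because $M$ is $D$-MI, and $g\Psi$ still takes values in $\calH_j$, so $g\Psi\in M\cap L^2(\Omega,\calH_j)=M_j$. Hence $M_j$ is a $D$-MI space.

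For (ii), write $M=M_D(\A)$. The inclusion $M_D(\calP_j(\A))\subseteq M_j$ follows since each $\calP_j(\Phi)$ with $\Phi\in\A\subseteq M$ belongs to $M_j$, and $M_j$ is $D$-MI by (i), hence stable under multiplication by $D$ and under taking closed spans. For the reverse inclusion I would invoke continuity of the bounded operator $\calP_j$: since $M_j=\calP_j(M)=\calP_j\big(\overline{\mbox{span}}\{g\Phi:\Phi\in\A,\,g\in D\}\big)$, boundedness gives $M_j\subseteq\overline{\mbox{span}}\{\calP_j(g\Phi):\Phi\in\A,\,g\in D\}=\overline{\mbox{span}}\{g\,\calP_j(\Phi):\Phi\in\A,\,g\in D\}=M_D(\calP_j(\A))$, using $\calP_j(g\Phi)=g\,\calP_j(\Phi)$.

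The core of the argument is (iii), which is also where I expect the main obstacle. The first task is to translate decomposability into a fiberwise statement about $J$: using Lemma \ref{lem:helson} (so that $P_M$ and $\calP_j$ act a.e.\ as $P_{J(\w)}$ and $P_{\calH_j}$) together with $\calP_j(M)\subseteq M$, one obtains $P_{\calH_j}(J(\w))\subseteq J(\w)$ for a.e.\ $\w$. Granting this for all $j$ and using $\calH=\calH_1\oplus\cdots\oplus\calH_\kappa$, I would derive the fiberwise orthogonal decomposition $J(\w)=\bigoplus_{j} J_j(\w)$, where $J_j(\w)=P_{\calH_j}(J(\w))=J(\w)\cap\calH_j$ is now genuinely a closed subspace and $P_{J_j(\w)}=P_{\calH_j}P_{J(\w)}$. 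This last identity settles measurability of $J_j$, since $\w\mapsto\langle P_{J_j(\w)}a,b\rangle=\langle P_{J(\w)}a,P_{\calH_j}b\rangle$ is measurable by measurability of $J$. Finally, combining $M_j=M\cap L^2(\Omega,\calH_j)$ with the range-function descriptions of $M$ and of $L^2(\Omega,\calH_j)$ identifies $M_j=\{\Phi\in L^2(\Omega,\calH):\Phi(\w)\in J_j(\w)\ \mbox{a.e.}\}$, so by the uniqueness in Theorem \ref{thm:MI-range-function}, $J_j$ is the range function of $M_j$. The delicate points to handle carefully are the passage of decomposability to the range-function level a.e.\ (a measurable-selection argument over a countable dense family) and the consequent closedness of $P_{\calH_j}(J(\w))$; the rest is bookkeeping.
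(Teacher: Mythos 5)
Your argument is correct, and in each item it takes a route genuinely different from the paper's. In (i) the paper proves closedness of $M_j$ by a sequential computation (Pythagoras applied to $\|\Psi_n-\Psi\|^2$, forcing $\calP_j^{\perp}(\Psi)=0$), while your identity $M_j=M\cap L^2(\Omega,\calH_j)$ --- which is exactly what that computation establishes implicitly --- gives closedness and $D$-invariance in one line. In (ii) the paper handles the nontrivial inclusion by duality: a $\Psi\in M_j$ orthogonal to $M_D(\calP_j(\A))$ satisfies $\langle\Psi,g\calP_j(\Phi)\rangle=\langle\Psi,g\Phi\rangle$, hence is orthogonal to $M\supseteq M_j$ and vanishes; you instead push the closed span through the bounded operator $\calP_j$, which is more direct and, for that inclusion, does not even use decomposability. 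The divergence is greatest in (iii): the paper applies the ``Moreover'' clause of Theorem \ref{thm:MI-range-function} to $M_j=M_D(\calP_j(\A))$ and compares $K(\w)=\overline{\mbox{span}}\{\calP_j(\Phi)(\w)\colon\Phi\in\A\}$ with $J_j(\w)$ via two inclusions, whereas you first prove the fiberwise invariance $P_{\calH_j}(J(\w))\subseteq J(\w)$ a.e.\ --- note this is precisely implication (i)$\Rightarrow$(ii) of Proposition \ref{lem:extra-invariance-range-function}, which the paper later deduces from (iii), so your order is reversed but not circular --- then identify $J_j(\w)=J(\w)\cap\calH_j$, obtain $P_{J_j(\w)}=P_{\calH_j}P_{J(\w)}$ for measurability, and finish by the uniqueness part of Theorem \ref{thm:MI-range-function} applied to $M_j=\{\Phi\colon\Phi(\w)\in J_j(\w)\ \mbox{a.e.}\}$. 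What your route buys is that it makes explicit why $P_{\calH_j}(J(\w))$ is closed a.e.\ (it is the range of the self-adjoint idempotent $P_{\calH_j}|_{J(\w)}$), a fact the statement of (iii) tacitly needs and which the paper's inclusion $K(\w)\subseteq J_j(\w)$ quietly presupposes; what the paper's route buys is brevity, since the generator formula delivers measurability and the identification in one stroke. Just be sure to carry out your flagged step in full: choose a countable $\A$ with $M=M_D(\A)$, apply $\calP_j(\A)\subseteq M$ pointwise, and intersect countably many null sets --- a countable generating family suffices, and no measurable-selection machinery is actually needed.
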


\begin{proof}
 $(i)$. Let $\Phi\in M$ and $g\in D$. Since, $g\calP_j(\Phi)=\calP_j(g\Phi)$ and $M$ is a $D$-MI space, we  conclude that $g\Phi\in M_j$ for every $g\in D$ and $\Phi\in M_j$. Therefore, to claim that $M_j$  is a $D$-MI space, we must show that it is closed. For this, take $\{\Psi_n\}_{n\in\N}\subseteq M_j$ converging to some $\Psi\in L^2(\Omega, \mathcal{H})$. Note that since $M_j\subseteq M$ and $M$ is closed, we have that $\Psi\in M$. Thus, 
 \begin{align*}
  \|\Psi_n-\Psi\|^2&=\int_\Omega\|P_{\calH_j}(\Psi_n(\w)-\Psi(\w))\|^2_\calH\,d\mu(\w)\,\, +
  \,\,\int_\Omega\|P_{\calH_j^{\perp}}(\Psi_n(\w)-\Psi(\w))\|^2_\calH\,d\mu(\w) \\
  &= \int_\Omega\|\Psi_n(\w)- P_{\calH_j}(\Psi(\w))\|^2_\calH\,d\mu(\w) +
  \int_\Omega\|P_{\calH_j^{\perp}}(\Psi(\w))\|^2_\calH\,d\mu(\w)\\
  &=\|\Psi_n-\calP_j(\Psi)\|^2\,+\,\|\calP_j^{\perp}(\Psi)\|^2,
 \end{align*}
where $\calP_j^{\perp}(\Psi)$ denotes the function $\calP_j^{\perp}(\Psi)(\w)=P_{\calH_j^{\perp}}(\Psi(\w))$ for a.e. $\w\in\Omega$. Now, since $\Psi_n\to\Psi$ as $n\to\infty$, we have that 
 $\Psi_n\to\calP_j(\Psi)$ as $n\to\infty$ and $\calP_j^{\perp}(\Psi)=0$. Hence, $\calP_j(\Psi)=\Psi$ and therefore, $\Psi\in M_j$ proving that $M_j$ is closed.
 
 $(ii)$. Clearly, $M_D(\calP_j(\A))\subseteq M_j$. To prove the other inclusion, suppose there exists $\Psi\in M_j$ orthogonal to $M_D(\calP_j(\A))$. Then, in particular we have that for every $\Phi\in\A$
 $$
  \langle\Psi, \calP_j(\Phi)\rangle=\int_\Omega\langle\Psi(\w), P_{\calH_j}(\Phi(\w))\rangle\,d\mu(\w)=\int_\Omega\langle\Psi(\w),\Phi(\w)\rangle\,d\mu(\w)= \langle\Psi, \Phi\rangle,
$$
which implies  that $\Psi\perp M$. Then $\Psi=0$ and the conclusion follows.

$(iii)$.  Let us call $K$ the measurable range function associated to $M_j$ through Theorem \ref{thm:MI-range-function}. Then, the result will be proven if we show that $K(\w)=J_j(\w)$ for a.e. $\w\in\Omega$. 

Using Theorem \ref{thm:MI-range-function} and item $(ii)$ of this proposition, we have that $K(\w)= \overline{\mbox{span} }\{\calP_j(\Phi)(\w)\colon \Phi\in\A\}$ for a.e. $\w\in\Omega$. Then, $K(\w)\subseteq J_j(\w)$ for a.e. $\w\in\Omega$. On the other hand, by the continuity of the orthogonal projections $P_{\calH_j}$, 
$J_j(\w)=P_{\calH_j}(\overline{\mbox{span} }\{\Phi(\w)\colon \Phi\in\A\})\subseteq 
\overline{\mbox{span} }\{P_{\calH_j}(\Phi(\w))\colon \Phi\in\A\}=K(\w)$ for a.e. $\w\in\Omega$. 
\end{proof}

We now provide a characterization of decomposable MI spaces  in terms of range functions. This result will be useful in the solution of the approximation 
problems. 

\begin{proposition}\label{lem:extra-invariance-range-function}
Let $M\subseteq L^2(\Omega, \mathcal{H})$ be a $D$-MI space with associated range function $J$. For $\kappa\in\N$ suppose that $\calH$ is decomposed as in \eqref{eq:descomposicion}. Then, the following two conditions are equivalent:
\begin{enumerate}
 \item [(i)] $M$ is decomposable with respect to  $\{\calH_1, \ldots, \calH_\kappa\}$. 
 \item [(ii)] For every $1\leq j\leq\kappa$, $J_j(\w)\subseteq J(\w)$ for a.e. $\w\in\Omega$, where $J_j(\w)=P_{\calH_j}(J(\w))$.
\end{enumerate}
\end{proposition}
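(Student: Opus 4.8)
The plan is to prove both implications directly from the range-function dictionary of Theorem \ref{thm:MI-range-function}, exploiting the identity $M=\{\Phi\colon\Phi(\w)\in J(\w)\text{ a.e.}\}$, together with Lemma \ref{lem:helson} and the structural facts about $M_j$ already recorded in Proposition \ref{prop:extra-invariance}. The algebraic content is light; essentially everything reduces to the fact that inclusion of $D$-MI spaces mirrors almost-everywhere inclusion of their range functions.

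For the implication (ii)$\Rightarrow$(i), which I expect to be the easy direction, I would argue pointwise. Fix $j$ and take an arbitrary $\Psi\in M_j$; by definition $\Psi=\calP_j(\Phi)$ for some $\Phi\in M$, so that $\Psi(\w)=P_{\calH_j}(\Phi(\w))$ a.e. Since $\Phi\in M$, Theorem \ref{thm:MI-range-function} gives $\Phi(\w)\in J(\w)$ a.e., whence $\Psi(\w)=P_{\calH_j}(\Phi(\w))\in P_{\calH_j}(J(\w))=J_j(\w)$ a.e. Invoking hypothesis (ii), $J_j(\w)\subseteq J(\w)$ a.e., so $\Psi(\w)\in J(\w)$ a.e., and applying Theorem \ref{thm:MI-range-function} once more this says exactly that $\Psi\in M$. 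As $\Psi\in M_j$ was arbitrary, $M_j\subseteq M$ for every $j$, i.e. $M$ is decomposable. Note that this direction only uses set-theoretic inclusion of the images $P_{\calH_j}(J(\w))$, so no closedness of $J_j(\w)$ is needed.

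For (i)$\Rightarrow$(ii) I would lean on Proposition \ref{prop:extra-invariance}. Assuming decomposability, part (iii) of that proposition tells us that $M_j$ is a $D$-MI space whose associated range function is precisely $J_j(\w)=P_{\calH_j}(J(\w))$; in particular $J_j(\w)$ is (a.e.) a closed subspace, so the image is well behaved. Decomposability gives $M_j\subseteq M$, and it remains to translate this space inclusion into the a.e. inclusion of range functions. The cleanest route is via Lemma \ref{lem:helson}: since $M_j\subseteq M$, the projection $P_M$ fixes every $\Psi\in M_j$, so $\Psi(\w)=(P_M\Psi)(\w)=P_{J(\w)}(\Psi(\w))$ a.e., i.e. $\Psi(\w)\in J(\w)$ a.e. Writing $M=M_D(\A)$ with $\A$ countable and using Proposition \ref{prop:extra-invariance}(ii), we have $M_j=M_D(\calP_j(\A))$; applying the previous remark to the countable generating family $\{\calP_j(\Phi)\colon\Phi\in\A\}$ and discarding a countable union of null sets yields $\calP_j(\Phi)(\w)\in J(\w)$ for all $\Phi\in\A$ simultaneously, for a.e. $\w$. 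Since $J(\w)$ is closed and $J_j(\w)=\overline{\mbox{span}}\{\calP_j(\Phi)(\w)\colon\Phi\in\A\}$ by Theorem \ref{thm:MI-range-function}, we conclude $J_j(\w)\subseteq J(\w)$ a.e., which is (ii).

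I expect the only genuinely delicate point to be the bookkeeping of exceptional null sets: the range-function identities and the containments $\Psi(\w)\in J(\w)$ each hold only almost everywhere, and one must ensure that after quantifying over the countable generating set $\A$ the conclusions persist on a common conull set. Separability of $L^2(\Omega)$, hence of $L^2(\Omega,\calH)$, which guarantees that $M$ is countably generated, is exactly what renders this harmless. Everything else is immediate from the tools already assembled.
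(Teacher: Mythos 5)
Your proposal is correct and follows essentially the same route as the paper: the direction (ii)$\Rightarrow$(i) is the paper's pointwise argument verbatim, and for (i)$\Rightarrow$(ii) you invoke Proposition \ref{prop:extra-invariance}(iii) exactly as the paper does, merely spelling out the null-set bookkeeping that the paper compresses into ``the result immediately follows.'' One small simplification: the detour through Lemma \ref{lem:helson} is unnecessary, since $\Psi\in M_j\subseteq M$ already gives $\Psi(\w)\in J(\w)$ a.e.\ directly from the characterization \eqref{eq:M-J} in Theorem \ref{thm:MI-range-function}.
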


\begin{proof}
 $(i)\Rightarrow(ii)$. Since by Proposition \ref{prop:extra-invariance} $J_j$ given by $J_j(\w)=P_{\calH_j}(J(\w))$ for a.e. $\w\in\Omega$ is the measurable range function associated to $M_j$ for every $1\leq j\leq\kappa$,  the result immediately follows.
 
 $(ii)\Rightarrow(i)$. Let  $1\leq j\leq\kappa$ and $\Psi\in M_j$. Then, there exists $\Phi\in M$ such that $\Psi=\calP_j(\Phi)$. Now, for a.e. $\w\in\Omega$ we have that $\Psi(\w)=P_{\calH_j}(\Phi(\w))\in P_{\calH_j}(J(\w))\subseteq J(\w)$ and this together with Theorem \ref{thm:MI-range-function} show that $\Psi\in M$.
\end{proof}

\section{Approximation problems}\label{sec:approximation-problems}
Throughout this section $(\W,m)$ will be a finite mesure space and the decomposition of $\calH$ will be as in \eqref{eq:descomposicion} for some $\kappa\in\N$. 
In this situation, we consider two approximation problems.  Both are in the same spirit: given a set $\F=\{F_1, \dots, F_m\}\subseteq L^2(\Omega, \mathcal{H})$ and a class $\calC$ of $D$-MI spaces, we look for an $M^*\in\calC$ which best fits the data $\F$ in the sense of least squares. This is, $M^*$ must satisfy that  
\begin{equation}\label{Problema}
\sum_{j=1}^m \|F_j- P_{M^*} F_j\|^2 \le \sum_{j=1}^m \|F_j- P_{M} F_j\|^2,
\end{equation}
for all $M\in\calC$. 

In the first version of this problem, we will minimize over the class $\calC_\ell$ which, for a given $\ell\in \N$ is defined as 
\begin{equation}\label{eq:class}
\mathcal{C}_{ \ell}:=\{M\subseteq L^2(\Omega, \mathcal{H})\colon M \mbox{ is }\text{D-MI}, \ell(M)\le\ell\}.
\end{equation} 
We shall refer to this particular version of the problem as {\it Problem 1}. 

In the second version of our approximation problem,  which will be referred  as {\it Problem 2}, we shall minimize over a subclass of $\calC_\ell$ defined as follows: 
given a decomposition of $\calH$ as in \eqref{eq:descomposicion}, $\mathcal{C}_{\{\calH_1, \ldots, \calH_\kappa\}, \ell}$ is the subclass of $\calC_\ell$ defined by 
\begin{equation}\label{eq:class-with-extra}
\mathcal{C}_{\{\calH_1, \ldots, \calH_\kappa\}, \ell}:=\{M\subseteq L^2(\Omega, \mathcal{H})\colon M \mbox{ is }\text{D-MI}, \ell(M)\le\ell, M_j\subseteq M \quad \forall \, 1\le j\le\kappa\}.
\end{equation} 
Note that $\mathcal{C}_{\{\calH_1, \ldots, \calH_\kappa\}, \ell}$ is the subclass of $\mathcal{C}_{\ell}$ of decomposable  MI spaces with respect to  $\{\calH_1, \ldots, \calH_\kappa\}$. 

The $L^2(\R^d)$-versions of Problems 1 and 2 were considered in \cite{ACHM07} and \cite{CM15} respectively, where the minimizing subspace was required to belong to certain class of shift-invariant spaces. The authors of \cite{ACHM07} showed a constructive way to find the minimizing shift-invariant space by means of the well-known Singular Value Decomposition.  
Recently, it was shown in \cite{CM15}, that in the context of shift-invariant spaces, Problem 2 has a solution.
We will see that it is not difficult to adapt the proofs of \cite{ACHM07, CM15}  to the setting of vector-valued functions and MI spaces. However, since this new setting is not as familiar as the shift-invariant space one and some details deserve to be carefully checked, we shall show here that Problems 1 and 2 have solutions, providing the complete proofs.     

For convenience of the reader we will state a result from \cite{ACHM07} that we will use in what follows.

\begin{theorem}\cite[Theorem 4.1]{ACHM07}\label{thm:optimal-Hilbert}
Let $\calH$ be an infinite dimensional Hilbert space, $\F=\{f_1, \dots, f_m\}\subseteq \calH$, $\mathcal{X}=\textrm{span}\{f_1, \dots, f_m\}$, $\la_1\geq\la_2\geq\cdots\geq\la_m$ the eigenvalues of the matrix $(\langle f_i, f_j\rangle)_{i,j}\C^{m\times m}$ and $y_1,\dots, y_m\in\C^m$, with $y_i=(y_{i1},\dots, y_{1m})^t$ orthonormal  left eigenvectors associated to the eigenvalues
$\la_1, \dots, \la_m$. Let $\ell\in\N$.

Define the vectors $q_1,\dots, q_\ell\in\calH$ by
$q_i=\la_i^{1/2}\sum_{j=1}^m y_{ij} f_j$
if $\la_i\neq0$ and $q_i=0$ otherwise. Then, $\{q_1,\dots, q_\ell\}$ is a Parseval frame for $W=\textrm{span}\{q_1,\dots, q_\ell\}$ and $W$ is optimal in the sense that 
$$\sum_{j=1}^m \|f_j- P_{W} f_j\|^2 \le \sum_{j=1}^m \|f_j- P_{S} f_j\|^2,$$
for all subspace $S$ such that $\dim(S)\leq \ell$.
Furthermore we have the following formula for the error
$$\mathcal{E}(\F, \ell)=\sum_{j=\ell+1}^m \la_j.$$
\end{theorem}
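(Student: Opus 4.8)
\emph{Overview.} The statement is the finite-dimensional optimal (Eckart--Young) approximation theorem expressed through the Gram matrix, and the plan is to prove it via the singular value decomposition of the synthesis operator together with Ky Fan's maximum principle for self-adjoint operators.

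\emph{Setup and the Parseval frame.} First I would introduce the synthesis operator $T\colon\C^m\to\calH$, $Tc=\sum_{j=1}^m c_j f_j$, whose range is $\X$. A short computation gives $(T^*T)_{jk}=\langle f_k,f_j\rangle$, so that $T^*T=\overline{G}$ with $G=(\langle f_i,f_j\rangle)_{i,j}$; the orthonormal \emph{left} eigenvectors $y_1,\dots,y_m$ of $G$ are precisely the orthonormal eigenvectors of $T^*T$, satisfying $T^*T\,y_i=\la_i y_i$. The key relation is then $\langle Ty_i,Ty_k\rangle_\calH=\langle T^*T y_i,y_k\rangle=\la_i\delta_{ik}$, so the vectors $Ty_i=\sum_j y_{ij}f_j$ are mutually orthogonal with $\|Ty_i\|^2=\la_i$. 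Consequently, writing $u_i=\la_i^{-1/2}Ty_i$ for $\la_i\neq0$ gives an orthonormal family in $\calH$ (the left singular vectors of the data), and $W=\mathrm{span}\{q_1,\dots,q_\ell\}$ coincides with $\mathrm{span}\{u_i:1\le i\le\ell,\ \la_i\neq0\}$. That $\{q_1,\dots,q_\ell\}$ is a Parseval frame for $W$ then follows by a direct computation of its frame operator $h\mapsto\sum_{i=1}^\ell\langle h,q_i\rangle q_i$ on $W$, using only the orthogonality relations above.

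\emph{Reduction of optimality to a trace maximization.} Since $P_S$ is an orthogonal projection, $\sum_{j=1}^m\|f_j-P_Sf_j\|^2=\sum_{j=1}^m\|f_j\|^2-\sum_{j=1}^m\|P_Sf_j\|^2$, and $\sum_j\|f_j\|^2=\mathrm{tr}(G)=\sum_{i=1}^m\la_i$; hence minimizing the error is equivalent to maximizing the captured energy $\sum_j\|P_Sf_j\|^2$. Taking an orthonormal basis $\{e_k\}$ of $S$ and letting $\Theta=TT^*$ be the finite-rank, positive, self-adjoint frame operator $\Theta h=\sum_j\langle h,f_j\rangle f_j$, one has $\sum_j\|P_Sf_j\|^2=\sum_k\langle\Theta e_k,e_k\rangle$. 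The operator $\Theta$ shares the nonzero eigenvalues $\la_1\ge\la_2\ge\cdots$ of $T^*T$, with eigenvectors the $u_i$.

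\emph{Conclusion and main obstacle.} Finally I would invoke Ky Fan's maximum principle: for the positive self-adjoint $\Theta$, the maximum of $\sum_{k=1}^\ell\langle\Theta e_k,e_k\rangle$ over orthonormal systems of size $\ell$ equals $\sum_{i=1}^\ell\la_i$ and is attained at $\{u_1,\dots,u_\ell\}$, i.e. at $S=W$; since $\Theta\ge0$, enlarging $S$ beyond dimension $\ell$ cannot help, so it suffices to treat $\dim S=\ell$. This yields $\sum_j\|P_Sf_j\|^2\le\sum_{i=1}^\ell\la_i$ with equality for $W$, establishing optimality, and the error equals $\sum_{i=1}^m\la_i-\sum_{i=1}^\ell\la_i=\sum_{j=\ell+1}^m\la_j$. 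The main obstacle is precisely this optimality step, namely the Ky Fan bound over \emph{arbitrary} subspaces $S$ of the infinite-dimensional $\calH$; the resolution is that $\Theta$ has rank at most $m$ and its range lies in $\X$, so after projecting $S$ onto $\X$ (which only increases the captured energy) the inequality reduces to the standard finite-dimensional Courant--Fischer/Ky Fan argument.
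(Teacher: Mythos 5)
The paper offers no proof of this statement to compare against: Theorem \ref{thm:optimal-Hilbert} is quoted from \cite[Theorem 4.1]{ACHM07} precisely so that it can be used as a black box, and your argument --- synthesis operator $T$ with $T^*T=\overline{G}$, orthogonality of the vectors $Ty_i$, reduction of the error to maximizing $\sum_{k}\langle \Theta e_k,e_k\rangle$ for the finite-rank positive operator $\Theta=TT^*$, and the Ky Fan/Courant--Fischer bound after projecting $S$ into $\X$ (a step you correctly justify, since $\dim P_\X S\le\dim S$ and $\|P_Sf\|\le\|P_{P_\X S}f\|$ for $f\in\X$) --- is correct and is essentially the standard SVD proof given in that reference. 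One caveat worth recording: as reproduced in this paper the normalization $q_i=\la_i^{1/2}\sum_{j=1}^m y_{ij}f_j$ is a typo for $\la_i^{-1/2}$; with exponent $+1/2$ one gets $\|q_i\|=\la_i$, so the $q_i$ would not form a Parseval frame. The correct exponent is confirmed by the paper's own use of the theorem in \eqref{eq:generators}, where $\beta_j(\w)=\la_j(\w)^{-1/2}$, and your proof implicitly adopts this correct reading via $u_i=\la_i^{-1/2}Ty_i$, which is exactly what makes the nonzero $q_i$ an orthonormal basis of $W$ and hence a Parseval frame.
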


\subsection{Optimal MI spaces}\label{sec:optimal-MI}
\

\medskip
In this subsection, we shall give a solution to Problem 1.

\begin{theorem}\label{thm-problema1}
Let $\ell\in \N$ and $\F=\{F_1, \dots, F_m\}\subseteq L^2(\Omega, \mathcal{H})$. Then, there exists a $D$-MI space $M^*$ of length $\ell$ solving Problem 1, that is, satisfying
\begin{equation*}
\sum_{j=1}^m \|F_j- P_{M^*} F_j\|^2 \le \sum_{j=1}^m \|F_j- P_{M} F_j\|^2,
\end{equation*}
for all $M\in\calC_\ell$. 
\end{theorem}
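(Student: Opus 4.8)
The plan is to reduce the global least-squares problem to a pointwise (fiber-wise) minimization by means of the range function machinery of Section \ref{preliminaries}. By Theorem \ref{thm:MI-range-function}, any competitor $M\in\calC_\ell$ is described by a measurable range function $J$, and by Lemma \ref{lem:helson} the projection onto $M$ acts fiber-wise, $(P_M F_j)(\w)=P_{J(\w)}(F_j(\w))$. Hence the functional factors through the fibers:
\begin{equation*}
\sum_{j=1}^m \|F_j- P_{M} F_j\|^2 = \int_{\W}\Big(\sum_{j=1}^m \|F_j(\w)-P_{J(\w)}(F_j(\w))\|_{\calH}^2\Big)\,dm(\w).
\end{equation*}
The length constraint translates into a pointwise dimension bound: if $M=M_D(\Phi_1,\dots,\Phi_n)$ with $n\le\ell$, the last part of Theorem \ref{thm:MI-range-function} gives $J(\w)=\overline{\mbox{span}}\{\Phi_1(\w),\dots,\Phi_n(\w)\}$, so $\dim J(\w)\le\ell$ for a.e. $\w$. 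It therefore suffices to choose, at a.e. fiber, a subspace $J^*(\w)\subseteq\calH$ of dimension at most $\ell$ minimizing the integrand, and then to verify that $\w\mapsto J^*(\w)$ is a genuine measurable range function.

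First I would solve the problem at a fixed fiber. For a.e. $\w\in\W$, applying Theorem \ref{thm:optimal-Hilbert} to the finite data set $\{F_1(\w),\dots,F_m(\w)\}\subseteq\calH$ yields an optimal subspace of dimension at most $\ell$: form the Gramian $G(\w)=(\langle F_i(\w),F_j(\w)\rangle_{\calH})_{i,j}\in\C^{m\times m}$, take its ordered eigenvalues $\la_1(\w)\ge\cdots\ge\la_m(\w)$ with orthonormal eigenvectors $y_i(\w)$ of components $y_{ij}(\w)$, and set $q_i(\w)=\la_i(\w)^{1/2}\sum_{j=1}^m y_{ij}(\w)F_j(\w)$ and $W(\w)=\mbox{span}\{q_1(\w),\dots,q_\ell(\w)\}$. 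Theorem \ref{thm:optimal-Hilbert} guarantees that $W(\w)$ minimizes the integrand among all subspaces of dimension at most $\ell$, with fiber error $\sum_{i>\ell}\la_i(\w)$. I then define the candidate range function by $J^*(\w):=W(\w)$.

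The main obstacle is measurability: I must check that $J^*$ is a measurable range function so that $M^*:=\{\Phi\in L^2(\W,\calH)\colon \Phi(\w)\in J^*(\w)\text{ a.e.}\}$ is a bona fide $D$-MI space via Theorem \ref{thm:MI-range-function}. The entries $\w\mapsto\langle F_i(\w),F_j(\w)\rangle_{\calH}$ are measurable, whence the ordered eigenvalues $\la_i(\w)$ are measurable as roots of a characteristic polynomial with measurable coefficients. The delicate point is to select the eigenvectors $y_i(\w)$ measurably despite possible multiplicities and eigenvalue crossings; here I would invoke a measurable selection argument, exactly as in the $L^2(\R^d)$ case of \cite{ACHM07}, to obtain measurable $\w\mapsto y_i(\w)$. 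Then each $q_i$ is measurable and $J^*$ is a measurable range function with $\dim J^*(\w)\le\ell$ a.e. To see that $M^*\in\calC_\ell$ I exhibit $\ell$ generators directly: normalizing the nonzero $q_i(\w)$ produces functions that are unit vectors on their support, hence belong to $L^2(\W,\calH)$ because $(\W,m)$ has finite measure, and they generate $M^*$ since their fiber-spans recover $J^*(\w)$; by the uniqueness in Theorem \ref{thm:MI-range-function} this shows $\ell(M^*)\le\ell$ (padding measurably where the fiber rank is smaller, if length exactly $\ell$ is desired, leaves the error unchanged).

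Finally I would establish optimality. For an arbitrary $M\in\calC_\ell$ with range function $J$ the constraint gives $\dim J(\w)\le\ell$ a.e., so the fiber-wise optimality of Theorem \ref{thm:optimal-Hilbert} yields
\begin{equation*}
\sum_{j=1}^m \|F_j(\w)-P_{J^*(\w)}(F_j(\w))\|_{\calH}^2 \le \sum_{j=1}^m \|F_j(\w)-P_{J(\w)}(F_j(\w))\|_{\calH}^2
\end{equation*}
for a.e. $\w\in\W$. Integrating this over $\W$ and using the fiber decomposition above gives \eqref{Problema} for $M^*$, together with the explicit error $\int_{\W}\sum_{i>\ell}\la_i(\w)\,dm(\w)$. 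In short, the argument is a vector-valued, fiberized transcription of the construction in \cite{ACHM07}, the only substantive extra work being the measurability bookkeeping in the separable-$\calH$ setting.
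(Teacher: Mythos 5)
Your proposal is correct and follows essentially the same route as the paper's own proof: fiberize the functional via Lemma \ref{lem:helson}, apply Theorem \ref{thm:optimal-Hilbert} at a.e.\ fiber to the Gramian $G_\F(\w)$ with measurably selected eigenvalues and eigenvectors (as in \cite{ACHM07}), and integrate the pointwise optimality over $\W$. The only cosmetic difference is that you define the range function $J^*$ first and recover generators afterwards (normalizing to unit vectors to get $L^2$ membership, which matches the paper's computation showing its generators $\Phi_j$, built with the scaling $\lambda_j(\w)^{-1/2}$, are unit vectors on $\{\lambda_j>0\}$), whereas the paper defines the generators $\Phi_1,\dots,\Phi_\ell$ directly and sets $M^*=M_D(\Phi_1,\dots,\Phi_\ell)$.
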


For the proof of the above theorem we shall use the Gramian associated to a given set of functions of $L^2(\W, \calH)$ which we define as follows:
let $\F=\{F_1, \dots, F_m\}\subseteq L^2(\Omega, \mathcal{H})$, the {\it Gramian} associated to $\F$ is  the function $G_\F:\W\to\R^m\times\R^m$ defined a.e. $\w\in\W$ by 
$$(G_\F(\w))_{ij}=\langle F_i(\w), F_j(\w)\rangle_\calH.$$
For a.e. $\w\in\W$, $G_\F(\w)$ is a positive-semidefinite self-adjoint matrix and we denote and order its eigenvalues as $\lambda_1(\w)\geq \cdots\geq \lambda_m(\w)$. Since  the entries of $G_\F$ are measurable functions, by \cite[Lemma 4.3]{ACHM07} or \cite[Lemma 2.3.5]{RS95} the eigenvalues $\lambda_j:\W\to\R$, $1\leq j\leq m$, are measurable functions and there exists $U:\W\to\C^m\times\C^m$ with measurable entries and satisfying that $U(\w)$ is unitary for a.e. $\w\in\W$ such that  
$$G_\F(\w)=U(\w)\Lambda(\w)U^*(\w), \quad \textrm{ a.e. }\w\in\W,$$
where $\Lambda(\w):=\textrm{diag}(\lambda_1(\w), \ldots, \lambda_m(\w))$.
Note that, denoting by $U_j(\w)$ the $j$-th column of $U(\w)$, one can see that $y_j(\w):=U^*_j(\w)$ is the left-eigenvector of $G_\F(\w)$ associated to $\lambda_j(\w)$. Moreover, $\{y_j(\w)=(y_{j1}(\w),\ldots,y_{jm}(\w))\}_{j=1}^m$ is an orthonormal basis of $\C^m$, for a.e $\w\in\W$. 

\begin{proof}[Proof of Theorem \ref{thm-problema1}]
Keeping the notation described above, we will construct the generators of the minimizing $D$-MI space -- solution of Problem 1 --  using the eigenvalues and left-eigenvectors of $G_\F(\w)$. 

For $1\leq j\leq \ell$ and a.e. $w\in\W$, we define
\begin{equation}\label{eq:generators} 
 \Phi_j(\w):=\beta_j(\w)\sum_{i=1}^m \overline{y_{ji}}(\w)F_i(\w),
\end{equation}
where $\beta_j(\w):=\lambda_j(\w)^{-{1/2}}$ if $\lambda_j(\w)\neq0$ and $\beta_j(\w):=0$ if 
$\lambda_j(\w)=0$.

First, observe that $\Phi_j:\W\to\calH$ are measurable functions for $1\leq j\leq \ell$. Let us see that $\Phi_j\in L^2(\W, \calH)$. For this, using that $y_j(\w)$ are left-eigenvectors associated to $\lambda_j(\w)$ and that $\{ y_1(\w), \ldots, y_m(\w)\}$ forms an orthonormal basis of $\C^m$, for a.e. $\w\in \W$, we compute
\begin{align}\label{eq:L2-generators}
 \int_\W\|\Phi_j(\w)\|_\calH^2\,d\mu(\w)&=\int_\W\beta_j(\w)^2\|\sum_{i=1}^m \overline{y_{ji}}(\w)F_i(\w)\|_\calH^2\,d\mu(\w)\nonumber\\
 &=\int_\W\beta_j(\w)^2\sum_{s,i=1}^m \overline{y_{ji}}(\w)\langle F_i(\w), F_s(\w)\rangle_\calH y_{js}(\w)\,d\mu(\w)\nonumber\\
 &=\int_\W\beta_j(\w)^2\sum_{i=1}^m \overline{y_{ji}}(\w)\sum_{s=1}^m(G_\F(w))_{is}  y_{js}(\w)\,d\mu(\w)\nonumber\\
 &=\int_\W\beta_j(\w)^2\sum_{i=1}^m \overline{y_{ji}}(\w)\lambda_j(\w) y_{ji}(\w)\,d\mu(\w)\nonumber\\
 &=\int_\W\beta_j(\w)^2\lambda_j(\w)\,d\mu(\w)\nonumber\\
 &=\int_\W\chi_{\{\lambda_j>0\}}(\w)\,d\mu(\w)\leq \mu(\W)<+\infty. 
\end{align}

Now, consider $M^*:=M_D(\Phi_1,\ldots, \Phi_\ell)\in \calC_\ell$ and let us prove that $M^*$ is a solution for Problem 1. 

Let $J^*$ be the range function associated to $M^*$ through Theorem \ref{thm:MI-range-function}. For a.e. $\w\in\W$, since $J^*(\w)=\overline{\mbox{span} }\{\Phi_j(\w)\colon 1\leq j\leq \ell\}$,  by Theorem \ref{thm:optimal-Hilbert}, we know that $J^*(\w)$ is a subspace of $\calH$ that satisfies
$$\sum_{j=1}^m\|F_j(\w)-P_{J^*(\w)}(F_j(\w))\|^2_\calH\leq 
\sum_{j=1}^m\|F_j(\w)-P_{W}(F_j(\w))\|^2_\calH,$$
for all subspaces $W\subseteq \calH,$ with $\dim(W)\leq\ell.$ 
Then, using this and Lemma \ref{lem:helson}, we have that, for $M$ being a $D$-MI space of length $\ell$ and associated range function $J$,
\begin{align*}
 \sum_{j=1}^m\|F_j-P_{M^*}F_j\|^2
 &=\int_\W\sum_{j=1}^m\|F_j(\w)-(P_{M^*}F_j)(\w)\|^2_\calH\,d\mu(\w)\\
 &=\int_\W\sum_{j=1}^m\|F_j(\w)-P_{J^*(\w)}(F_j(\w))\|^2_\calH\,d\mu(\w)\\
 &\leq \int_\W\sum_{j=1}^m\|F_j(\w)-P_{J(\w)}(F_j(\w))\|^2_\calH\,d\mu(\w)\\
 &=\int_\W\sum_{j=1}^m\|F_j(\w)-(P_{M}F_j)(\w)\|^2_\calH\,d\mu(\w)\\
 &=\sum_{j=1}^m\|F_j-P_{M}F_j\|^2,
\end{align*}
and this proves what we wanted. 
\end{proof}

\begin{remark}\noindent
\begin{enumerate}
 \item Due to Theorem \ref{thm:optimal-Hilbert}, $\{\Phi_j(\w)\colon 1\leq j\leq \ell\}$ forms a Parseval frame for $J^*(\w)$ for a.e. $\w\in\W$. This means that the generators defined as in \eqref{eq:generators} of the optimal MI space 
$M^*$ form a uniform frame for $J^*$ with frame bounds $A=B=1$. A complete characterization of uniform frames for range functions can be found in \cite[Theorem 2.1]{Ive15}. 
\item The condition on $\W$ of having finite measure is crucial for ensuring that the generators defined in \eqref{eq:generators} are in $L^2(\W, \calH)$. However, one could delete that condition on $\W$ and instead ask  the data set $\F$ to be  such that $\mu(\{\w\in\W:\, G_\F(w)\neq 0\})<\infty$ (see \eqref{eq:L2-generators}). This also implies that the generators belong to   $L^2(\W, \calH)$ since $\{\w\in\W:\, G_\F(w)\neq 0\}=\{\w\in\W:\, \lambda_1(w)> 0\}$ and the condition $\mu(\{\w\in\W:\, \lambda_1(w)> 0\})<\infty$ gives $\mu(\{\w\in\W:\, \lambda_j(w)> 0\})<\infty$ for all $1\leq j\leq m$.
\item Since Theorem \ref{thm:optimal-Hilbert} provides a formula for the error, it follows that 
the error for Problem 1, which is defined as $ \calE(\F, \ell):=\min_{M\in \calC_\ell}\sum_{j=1}^m\|F_j-P_{M}F_j\|^2$, coincides with the expression $ \sum_{j=\ell+1}^m\int_\W\lambda_j(w)\,d\mu(w).$
\end{enumerate}
\end{remark}

\medskip

Now we will consider an approximation  problem for the Hilbert space $\mathcal{H}$ over an appropriate minimizing class. Even though the main reason for considering this problem is to solve Problem 2, the  result turns out to be  interesting by itself.

\subsection{The approximation problem for an orthogonally decomposed  $\calH$}\label{sec:approx-on-H}
\

\medskip
In this subsection, we give a solution for an abstract version of the approximation problem for general Hilbert spaces. The main result is a generalization of \cite[Theorem 6.1]{CM15} where the problem is solved for $\ell^2(\Z^d)$ and a decomposition of it arising from a partition of $\Z^d$. Although our result is for general Hilbert spaces and arbitrary orthogonal decompositions, the proof  follows the lines of that of \cite[Theorem 6.1]{CM15}. 
Let $\calH$ be a  separable Hilbert space and suppose that it is decomposed into an orthogonal sum as in \eqref{eq:descomposicion}, that is 
$$\mathcal{H}=\mathcal{H}_1\oplus\cdots\oplus \mathcal{H}_{\kappa},$$
where $\kappa\in \N$. 
Denote by $P_V$ the orthogonal projection of $\calH$ onto the closed subspace $V$, and for a subspace $S\subseteq \calH$, let us abbreviate $S_i:=P_{\calH_i}S$ for $1\leq i\leq \kappa$. 

The class of subspaces of $\calH$ we shall work with is given by 
$$\calD_\ell:=\{S\subseteq \calH: S \,\,\textrm{ is a subspace}, \,\, \dim (S)\leq\ell,\,\, S_i\subseteq S\,\,\, \forall\,\, 1\leq i\leq \kappa\}.$$
The reason for choosing this particular class of subspaces is not arbitrary. In fact, this is exactly the class we have to use to be able to relate the present situation to Problem 2. We shall explain this in more details when proving that Problem 2 has solution.   

The minimization problem we want to solve now is the following: given a set of data $\X=\{x_1, \ldots, x_m\}\subseteq \calH$ and $\ell\in \N$ we seek for a subspace $S^*\in\calD_\ell$  satisfying
\begin{equation}\label{eq:minimizing-on-H}
\sum_{j=1}^m \|x_j- P_{S^*} x_j\|^2 \le \sum_{j=1}^m \|x_j- P_{S} x_j\|^2,
\end{equation}
for any other $S\in\calD_\ell$.

Note that, since $\|x\|^2=\|P_Sx\|^2+\|x- P_{S} x\|^2$ for all $x\in \calH$ and any subspace $S\subseteq\calH$ , then $S^*$ satisfies \eqref{eq:minimizing-on-H} if and only if $S^*$ satisfies 
$$\sum_{j=1}^m \|P_{S^*} x_j\|^2 \geq\sum_{j=1}^m \|P_{S} x_j\|^2,\quad \textrm {for  every }\quad S\in\calD_\ell.$$
On the other hand, for every subspace $S$ of $\calH$ it holds that $S\subseteq \bigoplus_{i=1}^\kappa S_i$. Hence, the condition $S\in\calD_\ell$ is equivalent to $S=\bigoplus_{i=1}^\kappa S_i$.

Then,  we  observe that for  $S\in\calD_\ell,$
\[
\sum_{j=1}^m \|P_{S} x_j\|^2=\sum_{j=1}^m \|\sum_{i=1}^\kappa P_{S_i} x_j\|^2
=\sum_{j=1}^m \|\sum_{i=1}^\kappa P_{S_i} P_{\calH_i}x_j\|^2= \sum_{j=1}^m  \sum_{i=1}^\kappa\|P_{S_i} P_{\calH_i}x_j\|^2.
\]
At this point we see that the above calculation suggests the following strategy for finding $S^*$: since $\sum_{j=1}^m \|P_{S} x_j\|^2$ is as big as possible 
when  $\sum_{j=1}^m\|P_{S_i} P_{\calH_i}x_j\|^2$ is as big as possible, for every $1\leq i\leq \kappa$, we will take an optimal $S_i$ --given by Theorem \ref{thm:optimal-Hilbert}--  minimizing the data $\X_i:=\{P_{\calH_i}x_1, \ldots, P_{\calH_i}x_m\}$ for every $1\leq i\leq \kappa$ and then show that $S^*:=\bigoplus_{i=1}^\kappa S_i$ is the optimal subspace we are looking for. In this process we will have to deal with an  extra condition on the dimension of the $S_i's$ because $\ell\geq \dim(S^*)=\sum_{i=1}^\kappa\dim(S_i)$. 

Before stating and proving the main result of this section about the existence of an optimal $S^*$ solving \eqref{eq:minimizing-on-H} we need to set some notation.  
For $\X=\{x_1, \ldots, x_m\}\subseteq \calH$ and $1\leq i\leq\ell$, $G_i$ is the Gramian associated to the data set $\X_i=$$\{P_{\calH_i}x_1, \ldots, P_{\calH_i}x_m\}$, that is the matrix in $\C^m\times \C^m$ given by $(G_i)_{kj}=\langle P_{\calH_i}x_k, P_{\calH_i}x_j\rangle_\calH$. By $\la^i_1\geq\cdots\geq\la^i_m$ we denote its eigenvalues and by $y^i_1, \ldots, y^i_m\in \C^m$ the corresponding left-eigenvectors. Let $\Lambda$ be the set of all eigenvalues of $G_i$ for every $1\leq i\leq\ell$, i.e. $\Lambda=\{\la^i_j:\,1\leq i\leq\ell,\,1\leq j\leq m\}$
and let $\mu_1\geq\cdots \geq\mu_\ell$ be the $\ell$ biggest elements of $\Lambda$ ordered decreasingly. Then, we have that for every $s=1,\ldots,\ell$, $\mu_s= \la^{i_s}_{j_s}$ for some $1\leq i_s\leq\ell,\,1\leq j_s\leq m$. With this notation we define $h_1,\ldots,h_\ell\in\calH$ by
\begin{equation}\label{eq:def-generators-in-H}
h_s:=(\la^{i_s}_{j_s})^{-1/2}\sum_{k=1}^m y^{i_s}_{j_s}(k)P_{\calH_{i_s}}x_k,
\end{equation}
when $\mu_s=\la^{i_s}_{j_s}\neq0$ and $h_s=0$ otherwise. Here, $y^{i_s}_{j_s}(k)$ is the $k$-th entry of $y^{i_s}_{j_s}$. Note that $h_s$ is defined in terms of the $s$-th biggest eigenvalue of $\Lambda$, its associated eigenvector and the corresponding projection of the elements of $\X$. 

\begin{theorem}\label{thm:optimal-H}
Let $\ell\in\N$ and $\X=\{x_1, \ldots, x_m\}\subseteq \calH$. Keeping the above notation we have that there exists $S^*\in \calD_\ell$ satisfying \eqref{eq:minimizing-on-H}. Moreover, $S^*=\overline{\mbox{span} }\{h_1,\ldots, h_\ell\}$  and $\{h_1,\ldots, h_\ell\}$ forms a Parseval frame for $S^*$, where $h_i$ is defined by \eqref{eq:def-generators-in-H} for all $1\leq i\leq\ell$. 
\end{theorem}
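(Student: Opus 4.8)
The plan is to exploit the decoupling established just before the statement: every $S\in\calD_\ell$ has the form $S=\bigoplus_{i=1}^\kappa S_i$ with $S_i=P_{\calH_i}S\subseteq\calH_i$ and $\sum_{i=1}^\kappa\dim(S_i)\le\ell$, and moreover
$$\sum_{j=1}^m\|P_Sx_j\|^2=\sum_{i=1}^\kappa\sum_{j=1}^m\|P_{S_i}P_{\calH_i}x_j\|^2.$$
Since minimizing \eqref{eq:minimizing-on-H} is the same as maximizing the left-hand side, the problem splits into $\kappa$ independent sub-problems coupled only through the single constraint $\sum_i\dim(S_i)\le\ell$. First I would fix, for each $1\le i\le\kappa$, a dimension budget $d_i\ge 0$ and note that the inner sum is exactly the energy captured by a subspace $S_i\subseteq\calH_i$ of the data set $\X_i=\{P_{\calH_i}x_1,\dots,P_{\calH_i}x_m\}$. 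Applying Theorem \ref{thm:optimal-Hilbert} to $\X_i$ with target dimension $d_i$, together with its error formula and the identity $\|x\|^2=\|P_Wx\|^2+\|x-P_Wx\|^2$, yields
$$\max_{\substack{W\subseteq\calH_i\\ \dim W\le d_i}}\ \sum_{j=1}^m\|P_WP_{\calH_i}x_j\|^2=\sum_{t=1}^{d_i}\la^i_t,$$
with maximizer the span of the vectors produced by that theorem for $\X_i$, which are precisely the $h_s$ having $i_s=i$.

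With this reduction the remaining task is purely combinatorial: maximize $\sum_{i=1}^\kappa\sum_{t=1}^{d_i}\la^i_t$ over integer allocations $(d_1,\dots,d_\kappa)$ with $d_i\ge 0$ and $\sum_i d_i\le\ell$. I would show the optimal value is $\sum_{s=1}^\ell\mu_s$, the sum of the $\ell$ largest elements of the pooled multiset $\Lambda$. The upper bound is immediate: a feasible allocation contributes a sum of at most $\ell$ non-negative elements of $\Lambda$, hence is bounded by the sum of its $\ell$ largest. The crucial point, and what I expect to be the main obstacle, is to exhibit a \emph{feasible} allocation attaining this bound; that is, to show that selecting the $\ell$ largest elements of $\Lambda$ respects the ``prefix'' structure forced by Theorem \ref{thm:optimal-Hilbert} (within group $i$ one may only use $\la^i_1,\dots,\la^i_{d_i}$, never some $\la^i_t$ while omitting a larger $\la^i_{t'}$, $t'<t$). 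This holds because the eigenvalues decrease within each group: if $\la^i_t$ is among the $\ell$ largest, so is every $\la^i_{t'}$ with $t'<t$, since $\la^i_{t'}\ge\la^i_t$. Setting $d_i:=\#\{s\le\ell:\,i_s=i\}$, the chosen eigenvalues from group $i$ are then exactly $\la^i_1,\dots,\la^i_{d_i}$ and $\sum_i d_i=\ell$. Ties in $\Lambda$ cause no difficulty provided they are broken so that each group's selection stays a prefix, which is always possible.

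Finally I would assemble the optimizer. Put $S_i^*:=\overline{\mbox{span}}\{h_s:\,i_s=i\}\subseteq\calH_i$ and $S^*:=\bigoplus_{i=1}^\kappa S_i^*=\overline{\mbox{span}}\{h_1,\dots,h_\ell\}$. Since the $h_s$ lie in pairwise orthogonal components $\calH_{i_s}$, one gets $\dim(S^*)=\sum_i\dim(S_i^*)\le\sum_i d_i=\ell$ and $P_{\calH_i}S^*=S_i^*\subseteq S^*$, so $S^*\in\calD_\ell$. By the componentwise Theorem \ref{thm:optimal-Hilbert}, $\{h_s:\,i_s=i\}$ is a Parseval frame for $S_i^*$ and its captured energy is $\sum_{t=1}^{d_i}\la^i_t$; summing over $i$ shows $S^*$ captures $\sum_{s=1}^\ell\mu_s$, the maximum over $\calD_\ell$ by the previous paragraph, which establishes \eqref{eq:minimizing-on-H}. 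The Parseval frame claim for the whole family $\{h_1,\dots,h_\ell\}$ then follows because an orthogonal union of Parseval frames for mutually orthogonal subspaces is a Parseval frame for their direct sum. One point worth checking along the way is that the components $\calH_i$ may be finite dimensional, so Theorem \ref{thm:optimal-Hilbert} is invoked for data in a possibly finite dimensional space; this is harmless, since whenever $\mu_s=0$ the vector $h_s$ vanishes and contributes no dimension, so the effective (nonzero) part of the allocation in group $i$ never exceeds $\mathrm{rank}(G_i)\le\dim(\calH_i)$.
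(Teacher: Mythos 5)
Your proof is correct and follows essentially the same route as the paper: decouple via $S=\bigoplus_{i=1}^\kappa S_i$, apply Theorem \ref{thm:optimal-Hilbert} componentwise to the data $\X_i$, and optimize the dimension allocation subject to $\sum_i d_i\le\ell$. The only real difference is that where the paper establishes existence by minimizing over the finite set $\calQ$ of allocations and then merely ``observes'' that the resulting generators are $\{h_1,\dots,h_\ell\}$, you prove that identification directly via the greedy/prefix argument on the pooled eigenvalues (decreasing within each group, ties broken to preserve prefixes) --- which is exactly the combinatorial fact the paper's final step tacitly relies on, so your version makes that step explicit and yields the error formula $\sum_{s>\ell}\mu_s$ as a byproduct.
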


\begin{proof}
We will follow the strategy described above. For this, we begin by defining 
$\calQ:=\{\alpha:=(\alpha_1,\ldots,\alpha_\kappa):\,\,\alpha_i\in\N\cup\{0\},\,\sum_{i=1}^\kappa\alpha_i\leq\ell\}$. Now, for a fixed $\alpha\in\calQ$, let $S^\alpha_i$ be the optimal subspace that minimizes the expression
$$\sum_{j=1}^m \|P_{\calH_i}x_j- P_{S} P_{\calH_i}x_j\|^2$$
over the class of subspaces of $\calH$ with dimension at most $\alpha_i$. The existence of such subspace $S^\alpha_i$ is guaranteed by Theorem \ref{thm:optimal-Hilbert}. Moreover, the same result asserts that $S^\alpha_i$ is generated by $h^\alpha_1,\ldots,h^\alpha_{\alpha_i}$ where $h^\alpha_r:=(\la^{i}_{r})^{-1/2}\sum_{k=1}^m y^{i}_{r}(k)P_{\calH_{i}}x_k$ if $\la^{i}_{r}\neq0$ and $h^\alpha_r=0$ otherwise, and also asserts that $\{h^\alpha_1,\ldots,h^\alpha_{\alpha_i}\}$ forms a Parseval frame for $S^\alpha_i$. 

Now for every $\alpha\in \calQ$, we consider $S^\alpha:=\bigoplus_{i=1}^\kappa S^\alpha_i$ where $S^\alpha_i$ is constructed as above. Let $\beta\in\calQ$ be such that $S^\beta$ is the subspace that minimizes $\sum_{j=1}^m \|x_j- P_{S^\alpha} x_j\|^2$ over $\alpha\in\calQ$ -- which exists because $\calQ$ is finite --, and set $S^*=S^\beta$. 
 
By construction, $S^*\in\calD_\ell$ and it is easy to see that $S^*$ satisfies \eqref{eq:minimizing-on-H}. Furthermore, since $\{h^\beta_1,\ldots,h^\beta_{\beta_i}\}$ forms a Parseval frame for $S^\beta_i$ and $\{S^\beta_i:\,1\leq i\leq \kappa\}$ are orthogonal subspaces, $\bigcup_{i=1}^\kappa\{h^\beta_1,\ldots,h^\beta_{\beta_i}\}$ forms a Parseval frame for $S^*$. What is left is to observe that $\bigcup_{i=1}^\kappa\{h^\beta_1,\ldots,h^\beta_{\beta_i}\}=\{h_1\ldots, h_\ell\}$, where each $h_s$ is as in \eqref{eq:def-generators-in-H}.
\end{proof}

\begin{remark}\label{rem:error-on-H}
 By Theorem \ref{thm:optimal-Hilbert}, if $\beta\in\calQ$ is as in the proof of Theorem \ref{thm:optimal-H}, for every $1\leq i\leq\kappa$, the error $\sum_{j=1}^m \|P_{\calH_i}x_j- P_{S^\beta_i} P_{\calH_i}x_j\|^2$ agrees with $\sum_{j=\beta_i+1}^m\la^i_j$. Then, the error $\sum_{j=1}^m \|x_j- P_{S^*}x_j\|^2$ is $\sum_{i=1}^\kappa\sum_{j=\beta_i+1}^m\la^i_j=\sum_{j=\ell+1}^m\mu_j$.
\end{remark}

\subsection{Optimal decomposable MI spaces}\label{sec:optimal-decomposable-MI}
\

\medskip

As we mentioned before, in the second version of our approximation problem we will minimize over the subclass of $\calC_\ell$, $\mathcal{C}_{\{\calH_1, \ldots, \calH_\kappa\}, \ell}$ (see \eqref{eq:class-with-extra}). Specifically, Problem 2  reads a follows:

{\bf Problem 2:} Let $\ell\in\N$ and $\{\calH_1, \ldots, \calH_\kappa\}$ a decomposition of $\calH$ as in \eqref{eq:descomposicion}. Given $\F=\{F_1, \dots, F_m\}\subseteq L^2(\Omega, \mathcal{H})$, find a $D$-MI space $M^*\in\mathcal{C}_{\{\calH_1, \ldots, \calH_\kappa\}, \ell}$ such that
\begin{equation}\label{Problema2}
\sum_{j=1}^m \|F_j- P_{M^*} F_j\|^2 \le \sum_{j=1}^m \|F_j- P_{M} F_j\|^2,
\end{equation}
for all  $M\in \mathcal{C}_{\{\calH_1, \ldots, \calH_\kappa\}, \ell}.$

Recall that in the situation of Problem 2 we have that   $\calH$ is  decomposed as in \eqref{eq:descomposicion}. Let   $\F=\{F_1,\dots,  F_m\}\subseteq L^2(\Omega, \mathcal{H})$ 
be the data set we want to approximate using the class $\mathcal{C}_{\{\calH_1, \ldots, \calH_\kappa\}, \ell}$ where $\ell\in\N$ is fixed. For a.e. $\w\in\W$, we will consider 
the optimal subspace $J^*(\w)\in \calD_\ell$ for the data $\F(\w)=\{F_1(\w),\dots,  F_m(\w)\}\subseteq \mathcal{H}$ given by Theorem \ref{thm:optimal-H}. These will give  point-wise solutions  that we will ``paste up'' to construct a solution for Problem 2. Roughly spiking,  we will show that $\w\mapsto J^*(\w)$ is a measurable range function 
and the associated MI space, $M^*$, given by Theorem \ref{thm:MI-range-function} is the solution we are looking for. Note that since $J^*(\w)\in \calD_\ell$ for a.e. $\w\in\W$, $P_{\calH_i}(J^*(\w))\subseteq J^*(\w)$ for every $1\leq\ i\leq\kappa$, thus by Lemma \ref{lem:extra-invariance-range-function}, $M^*$ will be decomposable with respect to  $\{\calH_1,\ldots, \calH_\kappa\}$.  This justified the choice of the class $\calD_\ell$ in Section \ref{sec:approx-on-H}.  

Using the notation of Section \ref{sec:approx-on-H} we have that for a.e $\w\in\W$, $J^*(\w)$ is generated by $\{\Phi_1(\w),$$ \ldots, \Phi_{\ell}(\w)\}$ where, for $s=1\ldots,\ell$,  
\begin{equation}\label{eq:generator-MI}
\Phi_s(\w):= \lambda_{j_s(\w)}^{i_s(\w)}(\w)^{-1/2} \sum_{k=1}^m \left(y_{j_s(\w)}^{i_s(\w)}(\w)\right)(k) \calP_{i_s} F_k(\w),
\end{equation}
if $\lambda_{j_s(\w)}^{i_s(\w)}(\w)\neq0$ and $\Phi_s(\w)=0$ otherwise.

Now, the solution for Problem 2 is provided by the upcoming theorem. 
\begin{theorem}\label{sol-problema2}
Let $\ell\in\N$ and $\F=\{F_1, \ldots, F_m\}\subseteq L^2(\Omega, \mathcal{H}).$ Then $M^*=M_{D}(\Phi_1, \dots, \Phi_{\ell})\in \mathcal{C}_{\{\calH_1, \ldots, \calH_\kappa\}, \ell}$ with $\Phi_s$ defined a.e. $\w\in\W$ by \eqref{eq:generator-MI},
is a solution of Problem 2. Moreover, $\{\Phi_1, \dots, \Phi_{\ell}\}$ is a uniform Parseval frame for $J^*$, where $J^*$ is the measurable range function associated to $M^*$.  
\end{theorem}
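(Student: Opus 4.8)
The plan is to build $M^*$ fiber by fiber from the pointwise optimal subspaces supplied by Theorem \ref{thm:optimal-H}, and then paste these together into a single measurable range function, exactly in the spirit of the argument for Problem 1 in Theorem \ref{thm-problema1}. Concretely, for a.e.\ $\w\in\W$ I would apply Theorem \ref{thm:optimal-H} to the finite data set $\F(\w)=\{F_1(\w),\dots,F_m(\w)\}\subseteq\calH$ and the fixed decomposition \eqref{eq:descomposicion}, obtaining the optimal subspace $J^*(\w)\in\calD_\ell$ generated by the vectors $\Phi_1(\w),\dots,\Phi_\ell(\w)$ of \eqref{eq:generator-MI}, together with the fact that $\{\Phi_1(\w),\dots,\Phi_\ell(\w)\}$ is a Parseval frame for $J^*(\w)$. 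The three assertions of the theorem then reduce to showing that (a) each $\Phi_s$ is a genuine element of $L^2(\W,\calH)$, so that $M^*=M_D(\Phi_1,\dots,\Phi_\ell)$ is a well-defined $D$-MI space of length at most $\ell$ whose range function is $J^*(\w)=\overline{\mbox{span}}\{\Phi_s(\w):1\le s\le\ell\}$ by the last part of Theorem \ref{thm:MI-range-function}; (b) $M^*$ is decomposable; and (c) $M^*$ is optimal.

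For the measurability and $L^2$-membership in (a), I would first fix, for each $1\le i\le\kappa$, a measurable diagonalization of the Gramian $G_i(\w)$ of $\{\calP_i F_1(\w),\dots,\calP_i F_m(\w)\}$ via \cite[Lemma 4.3]{ACHM07}, producing measurable eigenvalues $\lambda^i_1(\w)\ge\cdots\ge\lambda^i_m(\w)$ and measurable left-eigenvectors $y^i_1(\w),\dots,y^i_m(\w)$. The one genuinely new point relative to Theorem \ref{thm-problema1} is that $\Phi_s$ is built from the $s$-th largest value $\mu_s(\w)$ of the \emph{pooled} collection $\{\lambda^i_j(\w):1\le i\le\kappa,\ 1\le j\le m\}$, so I must verify that the selectors $\w\mapsto i_s(\w)$ and $\w\mapsto j_s(\w)$ can be chosen measurably. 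Since each $\mu_s$ is an order statistic of finitely many measurable functions, it is measurable; fixing once and for all a tie-breaking rule (lexicographic order on the pairs $(i,j)$), I would partition $\W$ into finitely many measurable sets on each of which the chosen pairs $(i_s,j_s)$ are constant. On each such piece $\Phi_s$ is, by \eqref{eq:generator-MI}, a fixed measurable combination of the measurable functions $\lambda^i_j$, $y^i_j$ and $\calP_i F_k$, hence $\Phi_s$ is measurable on all of $\W$. Membership $\Phi_s\in L^2(\W,\calH)$ then follows from the computation \eqref{eq:L2-generators} verbatim, which yields $\|\Phi_s(\w)\|_\calH^2=\chi_{\{\mu_s>0\}}(\w)$ and therefore $\|\Phi_s\|^2\le\mu(\W)<\infty$ because $(\W,m)$ is finite.

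Decomposability (b) is then immediate: Theorem \ref{thm:optimal-H} gives $J^*(\w)\in\calD_\ell$ for a.e.\ $\w$, so $P_{\calH_i}(J^*(\w))\subseteq J^*(\w)$ for every $1\le i\le\kappa$, and Proposition \ref{lem:extra-invariance-range-function} upgrades this to $M^*\in\mathcal{C}_{\{\calH_1,\dots,\calH_\kappa\},\ell}$. For optimality (c), I would take an arbitrary $M\in\mathcal{C}_{\{\calH_1,\dots,\calH_\kappa\},\ell}$ with range function $J$; length at most $\ell$ forces $\dim J(\w)\le\ell$ a.e., and decomposability forces $J_i(\w)\subseteq J(\w)$ a.e.\ by Proposition \ref{lem:extra-invariance-range-function}, so $J(\w)\in\calD_\ell$ a.e. Using Lemma \ref{lem:helson} to replace the global projections $P_{M^*}$ and $P_M$ by the fiberwise projections $P_{J^*(\w)}$ and $P_{J(\w)}$, invoking the pointwise optimality of $J^*(\w)$ over $\calD_\ell$ from Theorem \ref{thm:optimal-H}, and integrating the resulting fiberwise inequality over $\W$ gives \eqref{Problema2}; the uniform Parseval frame claim is just the fiberwise Parseval property with uniform bounds $A=B=1$. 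The crux of the whole argument — and the only step that is not a direct transcription of Theorem \ref{thm-problema1} — is the measurable selection of the top $\ell$ eigenvalues across the $\kappa$ distinct Gramians and of their eigenvectors; the partition-of-$\W$ device above is what resolves it, with ties handled harmlessly since any measurable tie-breaking leaves both $J^*(\w)$ and the attained error unchanged.
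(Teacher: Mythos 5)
Your proposal is correct and follows essentially the same route as the paper: pointwise optimization via Theorem \ref{thm:optimal-H}, the $L^2$-estimate of \eqref{eq:L2-generators}, decomposability through Proposition \ref{lem:extra-invariance-range-function}, and optimality by combining Lemma \ref{lem:helson} with fiberwise optimality and integration. The only difference is that you spell out the measurable selection of the pooled eigenvalue/eigenvector pairs (order statistics plus a lexicographic tie-break on a finite measurable partition of $\W$), which the paper simply delegates to \cite[Section 4]{CM15}; this is a faithful reconstruction of that cited argument, though note that with ties at the cutoff different tie-breaks may change $J^*(\w)$ itself, not just leave it fixed --- only the attained error, and hence optimality, is unaffected.
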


\begin{proof}
The measurability of the functions $\Phi_1, \dots, \Phi_{\ell}$ is a consequence of  the measurability of   the eigenvalues $\lambda_{j_s(\w)}^{i_s(\w)}(\w)$ and the eigenvectors 
$y_{j_s(\w)}^{i_s(\w)}(\w)$, and this follows  readily from the argument given in \cite[Section 4]{CM15}. Furthermore, the same calculations we used to obtain \eqref{eq:L2-generators} shows that $\Phi_1, \dots, \Phi_{\ell}\in L^2(\W, \calH)$.
Therefore, by Theorem \ref{thm:MI-range-function}, $J^*(\w)=\overline{\mbox{span} }\{\Phi_s(\w)\colon 1\leq s\leq \ell\}$ is the measurable range function associated to the MI space $M^*= M_{D}(\Phi_1, \dots, \Phi_{\ell})$. As we mentioned at the beginning of this section, $J^*(\w)\in \calD_\ell$  by construction and then, by Proposition \ref{lem:extra-invariance-range-function}, $M^*\in \mathcal{C}_{\{\calH_1, \ldots, \calH_\kappa\},\ell}$.

To show that $M^*$ solves Problem 2, we first observe that if $M\in\mathcal{C}_{\{\calH_1, \ldots, \calH_\kappa\}, \ell}$ with associated range function $J$, then by Proposition \ref{lem:extra-invariance-range-function}, $J(\w)\in\calD_\ell$ for a.e. $\w\in\W$. 
Thus, by construction and Theorem \ref{thm:optimal-H}, we have that for a.e. $\w\in\W$
$$\sum_{j=1}^m\|F_j(\w)-P_{J^*(\w)}(F_j(\w))\|^2_\calH\leq 
\sum_{j=1}^m\|F_j(\w)-P_{J(\w)}(F_j(\w))\|^2_\calH.$$
Then, integrating over $\W$ and using Lemma \ref{lem:helson} we obtain 
$$\sum_{j=1}^m\|F_j-P_{M^*}F_j\|^2\leq 
\sum_{j=1}^m\|F_j-P_{M}F_j\|^2,$$
which says that $M^*$ is a solution for Problem 2. 

Finally, since by Theorem \ref{thm:optimal-H}, $\{\Phi_1(\w), \ldots, \Phi_{\ell}(\w)\}$ is a Parseval frame for $J^*(\w)$  for a.e. $\w\in\W$, we conclude that $\{\Phi_1, \dots, \Phi_{\ell}\}$ is a uniform Parseval frame for $J^*$. 
\end{proof}

\begin{remark}\noindent
 \begin{enumerate}
  \item Taking into account Remark \ref{rem:error-on-H}, we have that the approximation error \mbox{$\sum_{j=1}^m\|F_j-P_{M^*}F_j\|^2$} is exactly $\sum_{i=1}^\kappa\sum_{j=\beta_i+1}^m \int_\W \la^{i(\w)}_{j(\w)}(\w)\,d\mu(\w)=\int_{\Omega} \sum_{j=\ell +1}^m \mu_j(\w)\, d\mu(\w).$
\item Note that the strategies for finding a solution for Problems 1 and 2 are similar. In both cases, the idea is to find point-wise solutions using previous result for general Hilbert spaces and then paste them up together to construct the optimal subspaces which solve Problem 1 and 2.  
\item Problem 2 can be solved using Problem 1 in different  way. Indeed, given  a data set
$\F=\{F_1, \dots, F_m\}\subseteq L^2(\Omega, \mathcal{H}),$ we can consider  the data set containing the elements  of  $\F$ but split according to the decomposition of $\calH$ given by 
 \eqref{eq:descomposicion},  
$\widetilde{\F}:=\{\calP_1 F_1,\dots, \calP_1 F_m,\dots,\calP_{\kappa} F_1,\dots, \calP_{\kappa} F_m\}$$\subseteq L^2(\Omega, 
 \mathcal{H}).$
It can be proven that the solution of Problem 1 constructed for  the data $\widetilde{\F}$ is a solution of Problem 2 for the data set $\F$. 
 \end{enumerate}
\end{remark}

\section{Application to SI spaces in LCA groups}\label{sec:shift-invariant}
In this section we shall see how the result about MI spaces can be used to obtain optimal shift-invariant subspaces for a given set of data. 
We shall work in $L^2(G)$, where $G$ is a second countable locally compact abelian group (LCA group for short) with operation written additively and the measure involved is the Haar measure on $G$ which we denote by $m_G$. Furthermore, we consider  a uniform lattice $H$ on $G$, that is a countable discrete subgroup $H$ of $G$ such that $G/H$ is compact, and  the translation operators along elements of $H$, $T_h$, defined by $T_hf(x):=f(x-h)$, for $f\in L^2(G)$ and a.e. $x\in G$. 
We say that a closed subspace $V\subseteq L^2(G)$ is $H$-invariant if for every $f\in V$, $T_hf\in V$ for all $h\in H$. 

Now, given any (at most countable) set of functions $\A\subseteq  L^2(G)$ , the space $V:=S_H(\A)$ where $S_H(\A)=\overline{\mbox{span} }\{T_h\phi:\,h\in H,\,\phi\in\A\}$ is an $H$-invariant space that we call the $H$-invariant space generated by $\A$. In this section we focus  on finitely generated $H$-invariant spaces, that is, when $\A$ is a finite set of $L^2(G)$. For a finitely generated $H$-invariant space $V$  we define   its length $\ell(V)$ as the minimum number of functions we need to generate it, more precisely,
$$\ell(V):=\min\{n\in\N: \, \exists\,\phi_1,\ldots,\phi_n\in L^2(G),\mbox{ such that } V=S_H(\phi_1,\ldots,\phi_n)\}.$$

\subsection{Optimal $H$-invariant spaces}\label{optimal-H}
\

\medskip
The first approximation result we will prove concerns the optimal finitely generated $H$-invariant space of length at most $\ell$ that best fits   a given set of data. To properly state the result we define, for a given $\ell\in\N$ the class  of $H$-invariant spaces $\calV_\ell$ as
$$\calV_\ell:=\{V\subseteq L^2(G):\, V \mbox{ is a finitely generated $H$-invariant space  with }\ell(V)\leq\ell\}.$$

\begin{theorem}\label{thm:optimal-SIS}
Let $\ell\in\N$ and $\F=\{f_1,\ldots,f_m\}\subseteq L^2(G)$ be a set of data. Then there exists $V^*\in\calV_\ell$ such that 
\begin{equation*}
 \sum_{j=1}^m\|f_j-P_{V^*}f_j\|^2_2\leq  \sum_{j=1}^m\|f_j-P_{V}f_j\|^2_2\quad\textrm{for all }\quad V\in\calV_\ell. 
\end{equation*}
Moreover, there exists a generator set for $V^*$,  $\{\phi_1,\ldots,\phi_\ell\}$ such that $\{T_h\phi_j:\,h\in H,\, 1\leq j\leq\ell\}$ is a Parseval frame for $V^*$. 
\end{theorem}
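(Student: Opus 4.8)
The statement to prove is Theorem 5.3: given data $\F = \{f_1,\ldots,f_m\} \subseteq L^2(G)$ for a second countable LCA group $G$ with uniform lattice $H$, there exists an optimal finitely generated $H$-invariant space $V^*$ of length at most $\ell$, and moreover $V^*$ has a generating set whose $H$-translates form a Parseval frame.

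Let me think about how I'd approach this.

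The whole architecture of the paper points clearly toward a single strategy: transport the problem to the MI setting via the fiberization map, apply Theorem 4.2 (solution of Problem 1), and pull the answer back. So the plan is to reduce Theorem 5.3 to Theorem 4.2.

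First I would need the fiberization isomorphism. This is the map — call it $\mathcal{T}$ — that sends $L^2(G)$ isometrically and isomorphically onto $L^2(\Omega, \mathcal{H})$ for the appropriate choice of $\Omega$ and $\mathcal{H}$. For LCA groups this comes from the Zak/fiberization construction: one takes $\Omega$ to be a fundamental domain (a Borel section) of the quotient $\wh{G}/H^\perp$, where $H^\perp$ is the annihilator of $H$ in the dual group $\wh{G}$, and $\mathcal{H} = \ell^2(H^\perp)$ (or $\ell^2$ of the relevant index set). The key structural fact I need is that under $\mathcal{T}$, the translation operators $T_h$, $h \in H$, become pointwise multiplication by characters, and that the family of these characters forms a determining set $D$ for $L^1(\Omega)$. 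Consequently $\mathcal{T}$ carries $H$-invariant subspaces of $L^2(G)$ bijectively onto $D$-MI subspaces of $L^2(\Omega,\mathcal{H})$, and — crucially — it preserves length: $\ell(V) = \ell(\mathcal{T}(V))$. This identification of $H$-invariant spaces with MI spaces (together with the translation-to-character correspondence) is exactly the content advertised in the introduction and in Remark 3.3(2), so I would cite the LCA fiberization results (\cite{CP10}, also \cite{BR14}).

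$$\mathcal{T}(T_h f)(\omega) = e_h(\omega)\,\mathcal{T}(f)(\omega),$$
where $e_h$ is the character associated to $h$.

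Given this dictionary, the proof is short. Set $F_j := \mathcal{T}(f_j)$ and $\F' = \{F_1,\ldots,F_m\} \subseteq L^2(\Omega,\mathcal{H})$. Since $\mathcal{T}$ is a surjective isometry intertwining the two notions of invariance and preserving length, for any $V \in \calV_\ell$ the image $M = \mathcal{T}(V)$ ranges over exactly $\calC_\ell$, and the isometry gives $\|f_j - P_V f_j\|^2 = \|F_j - P_M F_j\|^2$ (orthogonal projections are intertwined because $\mathcal{T}$ is unitary and $M = \mathcal{T}(V)$). Now apply Theorem 4.2 to $\F'$ to get an optimal $M^* \in \calC_\ell$ with generators $\Phi_1,\ldots,\Phi_\ell$ built from the Gramian eigendata, and whose fibers form a uniform Parseval frame for $J^*$ (Remark after Theorem 4.2, item 1). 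Define $V^* := \mathcal{T}^{-1}(M^*)$ and $\phi_j := \mathcal{T}^{-1}(\Phi_j)$; then $V^* \in \calV_\ell$ is optimal by transporting the inequality back, and $\{T_h \phi_j\}$ is a Parseval frame for $V^*$ because $\mathcal{T}$ carries this translate system to $\{e_h \Phi_j\}$ and a classical fiberization criterion (e.g.\ \cite{Bow00}, \cite{CP10}) says that $\{T_h\phi_j : h\in H,\, 1\le j\le \ell\}$ is a Parseval frame for $V^*$ precisely when $\{\Phi_j(\omega)\}_{j=1}^\ell$ is a Parseval frame for $J^*(\omega)$ for a.e.\ $\omega$, i.e.\ when it is a uniform Parseval frame.

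The main obstacle is not the logical skeleton, which is a clean transport argument, but making the fiberization dictionary fully precise in the LCA generality: namely verifying that the characters $\{e_h\}_{h\in H}$ form a determining set for $L^1(\Omega)$, that $\mathcal{T}$ preserves the notion of finite generation and length exactly, and that the Parseval-frame-of-translates $\Longleftrightarrow$ uniform-Parseval-frame-of-fibers equivalence holds in this setting. These are known facts, so in the actual write-up I would state the fiberization properties as a cited lemma and keep the proof focused on the transport steps.
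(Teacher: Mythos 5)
Your proposal follows essentially the same route as the paper's proof: transport the data through the fiberization isometry $\T$ of Proposition \ref{prop:fiber} (with determining set $D=\{e_h\chi_\W\}_{h\in H}$, under which $V\in\calV_\ell$ if and only if $\T V\in\calC_\ell$), apply Theorem \ref{thm-problema1} to $\{\T f_1,\ldots,\T f_m\}$, set $V^*=\T^{-1}M^*$ and $\phi_j=\T^{-1}\Phi_j$, and invoke \cite[Theorem 4.1]{CP10} to convert the uniform Parseval frame of fibers into a Parseval frame of translates. The only cosmetic deviation is your intertwining formula $\T T_hf=e_h\,\T f$ where the paper has $e_{-h}\T f$, which is immaterial since $\{e_h\}_{h\in H}=\{e_{-h}\}_{h\in H}$ as $H$ is a group.
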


To prove the above theorem we shall establish a one-to-one correspondence with MI spaces and use Theorem \ref{thm-problema1}. For this, we first need to recall some notions and properties about LCA groups. 

By $\wh G$ we denote the dual group of $G$, that is the set of continuous characters of $G$ and by $m_{\wh G}$ its Haar measure. We use the notation $(x,\g)$ for the complex value that the character $\g$ takes at $x$. For every $x\in G$, $e_x:\wh G\to \C$ is the character on $\wh G$ induced by $x$, i.e. $e_x(\g):=(x,\g)$ for all $\g\in\wh G$. 
For  a subgroup $K$ of $G$, we write $K^*$ for its annihilator which is the closed subgroup of $\wh G$ given by $K^*=\{\g\in\wh G: (x,\g)=1\,\, \forall\,x\in K\}$. In our case, since we have chosen the subgroup $H$ to be such that $G/H$ is compact, by  \cite[Lemma 2.1.3]{Rud62}, its annihilator $H^*$ is discrete. Moreover, since $G/H$ is metrizable, $H^*$ is countable, \cite[Theorem 24.15]{HR79}. We then fix 
$\W\subseteq \wh G$ a measurable section of the quotient $\wh G/H^*$, whose existence  
is provided by \cite[Lemma 1.1]{Mac52}. Since additionally $H$ is discrete, $\W$ can be chosen to be relatively compact and thus of finite measure (see \cite[Lemma 2]{KK98}). 
The Fourier transform of $f\in L^1(G)$ is defined by $\wh f(\g)=\int_G f(x)(-x,\g)\,dm_G(x)$ and  it can be extended to an operator from $L^2(G)$ to $L^2(G)$ which, for a proper normalization of $m_G$ and $m_{\wh G}$, turns out to be an isometric isomorphism. 

The upcoming proposition was proven in  \cite[Proposition 3.3]{CP10} and provides an isometric isomorphism between $L^2(G)$ and the vector-valued space $L^2(\W, \ell^2(H^*))$.
\begin{proposition}\label{prop:fiber}
 The fiberization mapping $\T:L^2(G)\to L^2(\W, \ell^2(H^*))$ defined by
 $$\T f(\w)=\{\widehat{f}(\w+\delta)\}_{\delta\in H^*}$$
 is an isometric isomorphism and it satisfies
 $\T T_hf(\w)=(-h, w)\T f(\w)$ for all $f\in L^2(G)$, all $h\in H$ and a.e. $\w\in\W$.
\end{proposition}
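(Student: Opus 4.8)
The plan is to factor $\T$ as a composition of two isometric isomorphisms: the Plancherel transform $\F\colon L^2(G)\to L^2(\wh G)$ and a ``tiling'' map $\Phi\colon L^2(\wh G)\to L^2(\W,\ell^2(H^*))$ induced by the section $\W$. For a suitable normalization of $m_G$ and $m_{\wh G}$ the Fourier transform is already an isometric isomorphism, so the whole burden falls on $\Phi$ and on checking that $\Phi\circ\F$ reproduces the stated formula. I would define $\Phi F(\w):=\{F(\w+\delta)\}_{\delta\in H^*}$; then by construction $\T f(\w)=\Phi(\wh f)(\w)=\{\wh f(\w+\delta)\}_{\delta\in H^*}$, so it suffices to prove that $\Phi$ is an isometric isomorphism.

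For the isometry I would exploit that $\W$ being a measurable section of $\wh G/H^*$ means the translates $\{\W+\delta\}_{\delta\in H^*}$ tile $\wh G$ up to an $m_{\wh G}$-null set. Writing
\[
\|\Phi F\|^2=\int_\W\sum_{\delta\in H^*}|F(\w+\delta)|^2\,dm_{\wh G}(\w),
\]
I would interchange the sum and the integral (Tonelli, since all terms are nonnegative and measurable) and use translation invariance of $m_{\wh G}$ to rewrite each summand as $\int_{\W+\delta}|F(\g)|^2\,dm_{\wh G}(\g)$. Summing over $\delta\in H^*$ and invoking the tiling collapses the expression to $\int_{\wh G}|F|^2\,dm_{\wh G}=\|F\|^2$. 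Surjectivity is obtained by running this backwards: given $g\in L^2(\W,\ell^2(H^*))$, the tiling lets me define $F$ unambiguously on $\wh G$ by $F(\w+\delta):=g(\w)(\delta)$, and the same measure computation shows $F\in L^2(\wh G)$ with $\Phi F=g$. Hence $\Phi$, and therefore $\T=\Phi\circ\F$, is an isometric isomorphism.

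It then remains to verify the intertwining relation. Here I would use that $\F$ carries translation to modulation, namely $\wh{T_hf}(\g)=(-h,\g)\,\wh f(\g)$, which follows from the substitution $x\mapsto x+h$ in the defining integral together with the character identity $(x+h,\g)=(x,\g)(h,\g)$. Evaluating fiberwise, for $\w\in\W$ and $\delta\in H^*$,
\[
\T T_hf(\w)(\delta)=\wh{T_hf}(\w+\delta)=(-h,\w+\delta)\,\wh f(\w+\delta)=(-h,\w)\,(-h,\delta)\,\wh f(\w+\delta).
\]
Since $\delta\in H^*$ annihilates $H$, we have $(h,\delta)=1$, hence $(-h,\delta)=1$, for every $h\in H$; thus the factor $(-h,\w)$ pulls out uniformly in $\delta$ and yields $\T T_hf(\w)=(-h,\w)\,\T f(\w)$.

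The step that deserves the most care is the tiling argument underlying the isometry: one must confirm that $\w\mapsto\{\wh f(\w+\delta)\}_{\delta\in H^*}$ is a genuinely measurable $\ell^2(H^*)$-valued function landing in $L^2(\W,\ell^2(H^*))$, and that the Tonelli interchange together with the $m_{\wh G}$-essential disjointness of the tiles $\{\W+\delta\}$ hold up to null sets. Once these measure-theoretic points are secured, the remaining computations, including the intertwining identity, are routine.
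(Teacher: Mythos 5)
Your proposal is correct, and it is essentially the paper's own argument: the paper gives no proof here, deferring to \cite[Proposition 3.3]{CP10}, whose proof is exactly your factorization $\T=\Phi\circ\F$ with the tiling/Tonelli computation for the periodization map $\Phi$ over the tiles $\{\W+\delta\}_{\delta\in H^*}$ and the modulation identity $\wh{T_hf}(\g)=(-h,\g)\wh f(\g)$ combined with $(h,\delta)=1$ for $\delta\in H^*$. The measure-theoretic points you flag (vector-valued measurability and the a.e.\ finiteness of $\sum_{\delta}|\wh f(\w+\delta)|^2$) are handled exactly as you indicate, so nothing further is needed.
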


The fiberization isometry of Proposition \ref{prop:fiber} not only gives that  $L^2(G)$ is isomorphic to $L^2(\W, \ell^2(H^*))$ but also provides the correspondence between $H$-invariant spaces in $L^2(G)$ and MI spaces of $L^2(\W, \ell^2(H^*))$. Here the underlying determining set is $D=\{e_h\chi_{\W}\}_{h\in H}$, where $\chi_\W$ is the characteristic function of $\W$, \cite[Corollary 3.6]{BR14}.  Then, since $\T T_hf=e_{-h}\T f$ for all $f\in L^2(G)$, we have that $V\subseteq L^2(G)$ is an $H$-invariant space if and only is $\T V\subseteq L^2(\W, \ell^2(H^*))$ is an MI space with respect to $D$. This fact allows us to prove Theorem \ref{thm:optimal-SIS}.

\begin{proof}[Proof of Theorem \ref{thm:optimal-SIS}]
Let $\T$ be the isomorphism of Proposition \ref{prop:fiber} and consider the set 
$\{\T f_1,\ldots,\T f_m\}\subseteq  L^2(\W, \ell^2(H^*))$. As we discussed before, $H$-invariant spaces are in one-to-one correspondence with  $D$-MI spaces under $\T$, where $D=\{e_h\chi_{\W}\}_{h\in H}$. Moreover, if $\A\subseteq L^2(G)$ is an at most countable set, then  $V=S_H(\A)$ if and only if $\T V=M_D(\T\A)$ where $\T\A:=\{\T\phi:\,\phi\in\A\}$. In particular this shows that $V\in\calV_\ell$ if and only if $\T V\in \calC_\ell$ where $\calC_\ell$ is as in \eqref{eq:class}. 

Let $M^*=M_D(\Phi_1,\ldots, \Phi_\ell)\in \calC_\ell$ be an optimal MI space for the data in $L^2(\W, \ell^2(H^*)),$ 
$\{\T f_1,\ldots,\T f_m\},$ which exists due to Theorem \ref{thm-problema1}. Now, set $V^*:=\T^{-1}M^*\in\calV_\ell$ and consider any $V\in \calV_\ell$. Then, by Theorem \ref{thm-problema1} and Proposition \ref{prop:fiber} we have
\begin{align*}
\sum_{j=1}^m\|f_j-P_{V^*}f_j\|^2_2&=\sum_{j=1}^m\|\T f_j-\T P_{V^*}f_j\|^2\\
&=\sum_{j=1}^m\|\T f_j- P_{M^*}\T f_j\|^2\\
&\leq\sum_{j=1}^m\|\T f_j- P_{\T V}\T f_j\|^2=\sum_{j=1}^m\|f_j-P_{V}f_j\|^2_2.\\
\end{align*}
Moreover, since $M^*=M_D(\Phi_1,\ldots, \Phi_\ell)$, $V^*=S_H(\phi_1,\ldots, \phi_\ell)$, where $\phi_j:=\T^{-1}\Phi_j$ for all $1\leq j\leq\ell$. Recall that by Theorem \ref{thm-problema1} $\{\Phi_1,\ldots, \Phi_\ell\}$ is a uniform Parseval frame for the range function associated to $M^*$,  then, using \cite[Theorem 4.1]{CP10} we have that $\{T_h\phi_j:\,h\in H,\, 1\leq j\leq\ell\}$ is a Parseval frame for $V^*$, and this completes the proof.
\end{proof}

\begin{remark}\noindent
 \begin{enumerate}
  \item It is well known that there are LCA groups which do not have any uniform lattice, (see \cite{ BR14, KK98}). However, there is an easy way to construct  groups that do have uniform lattices. For a given  discrete and countable LCA group $H$  and a compact  group $C$, simply take the LCA group $G=H\times C$ with the product measure. Then $H\times\{0\}$ is a uniform lattice on $G$.
  \item When instead of translations other operators -- such as dilations -- are involved,  an analogous result to Theorem \ref{thm:optimal-SIS} can be obtained. To be more precise, when a discrete abelian group $\Gamma$ is acting on a measure space $\mathcal{X}$, one can  consider closed  subspaces of $L^2(\mathcal{X})$ that are invariant under unitary operators arising from the group action. In this situation, it was proven in \cite{BHP15} that  the mapping that relates these invariant subspaces with MI spaces is the generalized Zak transform, introduced in \cite{HSWW10} (see \cite{BHP15} for details). Under this identification it can be proven that the version of  Theorem \ref{thm:optimal-SIS} adapted to this setting is also true.    
 \end{enumerate}
\end{remark}

\subsection{Optimal $H$-invariant spaces with extra invariance}
\

\medskip
Keeping the notation and hypotheses on $G$ and $H$ described above, we consider a closed  subgroup $\Gamma$ of $G$ containing $H$ and $H$-invariant spaces $V$ with extra invariance on $\Gamma$. This means that  $T_yV\subseteq V$ for all $y\in\Gamma$ and we also say that $V$ is $\Gamma$-invariant. 
These type of $H$-invariant spaces were completely characterized in \cite{ACP10} 
and they are in one-to-one correspondence with MI spaces in $L^2(\W, \ell^2(H^*))$ that are decomposable  with respect to the decomposition of $\ell^2(H^*)$ that we describe below.

Let $\calN$  be an at most  countable section for the quotient $H^*/\Gamma^*$. 
Then $H^*=\bigcup_{\sigma\in \calN}\Gamma^*+\sigma$ where the union is disjoint and therefore $\ell^2(H^*)=\bigoplus_{\sigma\in \calN}\ell^2(\Gamma^*+\sigma)$.

For $\sigma \in\mathcal{N}$
we define the set  $B_{\sigma}$ as
\begin{equation}\label{B-sigma}
B_{\sigma}=\W+\sigma+\Gamma^*=\bigcup_{\gamma^*\in \Gamma^*}(\W+\sigma)+\gamma^*.
\end{equation}
It is not difficult to see that $\{B_{\sigma}\}_{\sigma\in\calN}$ is a partition of  $\wh{G}.$


\begin{lemma}\label{lem:V-M-extra}
 Let $V\subseteq  L^2(G)$ be an $H$-invariant space and $\Gamma\subseteq G$  a closed subgroup containing $H$. If $\T$ is as in Proposition \ref{prop:fiber} and   $M:=\T V,$ then the following conditions are equivalent:
 \begin{enumerate}
  \item[(i)] $V$ is $\Gamma$-invariant.
  \item[(ii)] $M \subseteq  L^2(\W,\ell^2(H^*))$ is decomposable with respect to $\{\ell^2(\Gamma^*+\sigma)\}_{\sigma\in\calN}$.
 \end{enumerate}
\end{lemma}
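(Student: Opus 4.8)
The plan is to transfer both conditions to the fiber side via $\T$ and reduce everything to how the translations $T_y$, $y\in\Gamma$, act on $M=\T V$. First I would compute the effect of $\T$ on $T_y$ for a \emph{general} $y\in\Gamma$, not merely $y\in H$. Since $\wh{T_yf}(\g)=(-y,\g)\wh f(\g)$, for an index $\delta\in H^*$ lying in the coset $\Gamma^*+\sigma$ one has $(-y,\w+\delta)=(-y,\w)(-y,\sigma)$, because $(-y,\gamma^*)=1$ for every $\gamma^*\in\Gamma^*$ when $y\in\Gamma$. Writing $\calP_\sigma$ for the block projection of $L^2(\W,\ell^2(H^*))$ onto $L^2(\W,\ell^2(\Gamma^*+\sigma))$, this gives the factorization
\[
\T T_yf=e_{-y}\,R_y(\T f),\qquad R_y\Phi:=\sum_{\sigma\in\calN}(-y,\sigma)\,\calP_\sigma\Phi,
\]
where $R_y$ is the bounded ``block modulation'' operator. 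Because $M$ is $D$-MI, hence $L^\infty(\W)$-MI by the Remark following Theorem \ref{thm:MI-range-function}, and $e_{-y}|_\W$ is unimodular with inverse $e_y|_\W\in L^\infty(\W)$, multiplication by $e_{-y}$ is a bijection of $M$ onto itself. Thus $\T T_yf\in M\iff R_y(\T f)\in M$, so condition $(i)$ is equivalent to: $R_y\Phi\in M$ for every $\Phi\in M$ and every $y\in\Gamma$.

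Next I would show this is equivalent to decomposability, which by Definition \ref{def:decomposable-MI} means exactly $\calP_\sigma\Phi\in M$ for all $\Phi\in M$ and all $\sigma\in\calN$. The implication ``$\Leftarrow$'' is immediate: if every $\calP_\sigma\Phi\in M$, then the partial sums of $R_y\Phi=\sum_\sigma(-y,\sigma)\calP_\sigma\Phi$ lie in $M$ and converge in $L^2$ (since $\sum_\sigma\|\calP_\sigma\Phi\|^2=\|\Phi\|^2<\infty$), whence $R_y\Phi\in M$ by closedness.

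The converse ``$\Rightarrow$'' is the crux and the main obstacle. The key structural observation is that, as $H$ is a uniform lattice, $G/H$ is compact, so the closed subgroup $\Gamma/H$ is a \emph{compact} abelian group whose discrete dual is $H^*/\Gamma^*$. The functions $y+H\mapsto(-y,\sigma)$, $\sigma\in\calN$, are precisely its characters and therefore obey the orthogonality relations $\int_{\Gamma/H}(y,\sigma_0)\overline{(y,\sigma)}\,dy=\delta_{\sigma_0,\sigma}$ for the normalized Haar measure on $\Gamma/H$. Fixing $\sigma_0$ and integrating (interchanging sum and integral is justified by the uniform-in-$y$ $L^2$-convergence of the partial sums), I obtain
\[
\int_{\Gamma/H}\overline{(-y,\sigma_0)}\,R_y\Phi\,dy=\sum_{\sigma\in\calN}\Big(\int_{\Gamma/H}(y,\sigma_0)\overline{(y,\sigma)}\,dy\Big)\calP_\sigma\Phi=\calP_{\sigma_0}\Phi .
\]
To make this rigorous I would verify that $y\mapsto R_y\Phi$ is norm-continuous from the compact group $\Gamma/H$ into $L^2(\W,\ell^2(H^*))$ (by dominated convergence applied to $\sum_\sigma|(-y,\sigma)-(-y_0,\sigma)|^2\|\calP_\sigma\Phi\|^2$), so the Bochner integral exists; and since the integrand takes values in the closed subspace $M$, the integral lies in $M$ as well (test against $\psi\in M^\perp$: $\langle R_y\Phi,\psi\rangle\equiv0$, hence $\langle\int R_y\Phi\,dy,\psi\rangle=0$). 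Therefore $\calP_{\sigma_0}\Phi\in M$, establishing decomposability.

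Assembling the two reductions yields $(i)\Leftrightarrow(ii)$. The only genuinely delicate step is this averaging argument: recognizing $\Gamma/H$ as compact and combining character orthogonality with the fact that a Bochner integral of an $M$-valued map stays in the closed subspace $M$, in order to recover the individual blocks $\calP_\sigma\Phi$ from the family $\{R_y\Phi\}_{y\in\Gamma}$.
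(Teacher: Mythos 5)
Your proof is correct, but it takes a genuinely different route from the paper's. The paper's argument is essentially a two-line reduction: it observes that $\T f^{\sigma}=P_{\sigma}\T f$, where $\wh{f^{\sigma}}=\chi_{B_{\sigma}}\wh f$ and $P_\sigma$ is the block projection, so that condition (ii) is exactly the statement that $f^{\sigma}\in V$ for every $f\in V$ and $\sigma\in\calN$ (equivalently, $\T f^\sigma(\w)\in J(\w)$ a.e.), and then it invokes the known characterization of extra invariance, \cite[Proposition 5.4]{ACP10}. You instead give a self-contained argument entirely on the fiber side: the factorization $\T T_yf=e_{-y}R_y(\T f)$ reduces (i) to invariance of $M$ under the block modulations $R_y$ (and your passage from $e_{-y}R_y\Phi\in M$ to $R_y\Phi\in M$ via the $L^\infty(\W)$-MI property is the correct use of the Remark after Theorem \ref{thm:MI-range-function}), while the individual blocks $\calP_{\sigma_0}\Phi$ are recovered by averaging $\overline{(-y,\sigma_0)}\,R_y\Phi$ over the compact group $\Gamma/H$, whose discrete dual is $H^*/\Gamma^*$, using character orthogonality together with the fact that a Bochner integral of a continuous $M$-valued map stays in the closed subspace $M$. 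The analytic details you flag all check out: $y\mapsto R_y\Phi$ indeed descends to $\Gamma/H$ since $(-h,\sigma)=1$ for $h\in H$ and $\sigma\in H^*$ (worth stating explicitly), norm continuity follows by dominated convergence against the summable majorant $4\|\calP_\sigma\Phi\|^2$, and the sum/integral interchange is justified by the uniform tail bound $\sum_{\sigma\notin F}\|\calP_\sigma\Phi\|^2$. What your approach buys is independence from \cite{ACP10} --- in effect you reprove the fiber-side content of that characterization, by an averaging argument that works verbatim for closed, not necessarily discrete, $\Gamma$; what the paper's approach buys is brevity, outsourcing precisely this work to the cited result.
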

\begin{proof}
For each $\sigma\in \calN$, let $P_\sigma: L^2(\W,\ell^2(H^*))\to L^2(\W,\ell^2(\Gamma^*+\sigma))$ be the orthogonal projection onto  $L^2(\W,\ell^2(\Gamma^*+\sigma))$. Now, for any $f\in L^2(G)$ define $f^{\sigma}$ via its Fourier transform as $\widehat{f^{\sigma}}=\chi_{B_{\sigma}}\widehat{f}$, where 
$\{B_{\sigma}\}_{\sigma\in\calN}$ is the partition of $\wh G$ given by \eqref{B-sigma}. Then we have that $\T f^{\sigma}(\w)=P_\sigma(\T f)(\w)$ for a.e. $\w\in\W$.  Therefore, item {\it(ii)} is equivalent to require that for every $f\in V$ and each $\sigma\in \calN$, 
$\T f^{\sigma}(\w)\in J(\w)$ for a.e. $\w\in\W$, where $J$ is the measurable range function associated to $M$ through Theorem \ref{thm:MI-range-function}. 
Thus, the result follows from \cite[Proposition 5.4]{ACP10}.
\end{proof}

We now restrict ourself to subgroups $\Gamma$ that are discrete. In that case, the section $\calN$ of $H^*/\Gamma^*$ is finite (see \cite[Remark 2.2]{CP12}). We consider the subclass of $\calV_\ell$ denoted by $\calV_{\Gamma, \ell}$
and defined by 
$$\calV_{\Gamma,\ell}:=\{V\in\calV_\ell:\, V \mbox{ has extra invariance on }\Gamma\}.$$
For this class we can obtain the analogous result to Theorem \ref{thm:optimal-SIS}. Its proof is a consequence of  Theorem \ref{sol-problema2} and Lemma \ref{lem:V-M-extra} and follows the lines of the proof of Theorem \ref{thm:optimal-SIS}. Thus, we omit it. 

\begin{theorem}\label{thm:optimal-SIS-with-extra-inv}
Let $H\subseteq G$ be a uniform lattice and $\Gamma$  a larger discrete subgroup of $G$ containing $H$. 
Let $\ell\in\N$ and $\F=\{f_1,\ldots,f_m\}\subseteq L^2(G)$ be a set of data. Then there exists $V^*\in\calV_{\Gamma,\ell}$ such that 
\begin{equation*}
 \sum_{j=1}^m\|f_j-P_{V^*}f_j\|^2_2\leq  \sum_{j=1}^m\|f_j-P_{V}f_j\|^2_2\quad\textrm{for all }\quad V\in\calV_{\Gamma,\ell}. 
\end{equation*}
Moreover, there exists a generator set for $V^*$,  $\{\phi_1,\ldots,\phi_\ell\}$ such that $\{T_h\phi_j:\,h\in H,\, 1\leq j\leq\ell\}$ is a Parseval frame for $V^*$. 
\end{theorem}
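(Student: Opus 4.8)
The plan is to transport Problem 2 for the data $\F$ from the shift-invariant side into the multiplicatively invariant side via the fiberization isometry $\T$ of Proposition \ref{prop:fiber}, solve the approximation problem there using Theorem \ref{sol-problema2}, and then transport the optimizer back by $\T^{-1}$. The overall architecture mimics the proof of Theorem \ref{thm:optimal-SIS}, with Lemma \ref{lem:V-M-extra} playing the role that the plain $H$-invariant/MI correspondence played there. Concretely, I would first recall that under $\T$ an $H$-invariant space $V$ corresponds to the $D$-MI space $M=\T V$ in $L^2(\W,\ell^2(H^*))$ with $D=\{e_h\chi_\W\}_{h\in H}$, and that $V=S_H(\A)$ if and only if $\T V=M_D(\T\A)$, so that $\ell(V)=\ell(\T V)$ and hence $V\in\calV_\ell\Leftrightarrow \T V\in\calC_\ell$.

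The key step is to identify the image under $\T$ of the extra-invariance class $\calV_{\Gamma,\ell}$ with the decomposable class of Problem 2. By Lemma \ref{lem:V-M-extra}, a space $V\in\calV_\ell$ is $\Gamma$-invariant if and only if $M=\T V$ is decomposable with respect to the fixed orthogonal decomposition
\[
\ell^2(H^*)=\bigoplus_{\sigma\in\calN}\ell^2(\Gamma^*+\sigma).
\]
Since $\Gamma$ is discrete the section $\calN$ is finite, say $\kappa:=\#\calN$, so this is precisely a decomposition of $\calH:=\ell^2(H^*)$ into $\kappa$ orthogonal summands as in \eqref{eq:descomposicion} with $\calH_\sigma:=\ell^2(\Gamma^*+\sigma)$. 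Combining this with the length identity from the previous paragraph gives the exact correspondence
\[
V\in\calV_{\Gamma,\ell}\quad\Longleftrightarrow\quad \T V\in\mathcal{C}_{\{\calH_\sigma\}_{\sigma\in\calN},\,\ell},
\]
so that $\T$ is a bijection between the two admissible classes.

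With this correspondence in hand the argument is purely formal. I would set $\F':=\{\T f_1,\ldots,\T f_m\}\subseteq L^2(\W,\ell^2(H^*))$, invoke Theorem \ref{sol-problema2} to obtain an optimal decomposable MI space $M^*=M_D(\Phi_1,\dots,\Phi_\ell)\in\mathcal{C}_{\{\calH_\sigma\},\ell}$ for $\F'$, and define $V^*:=\T^{-1}M^*$, which lies in $\calV_{\Gamma,\ell}$ by the displayed equivalence. Because $\T$ is an isometric isomorphism it intertwines orthogonal projections, $\T P_{V^*}=P_{M^*}\T$ and $\T P_V=P_{\T V}\T$, so for any $V\in\calV_{\Gamma,\ell}$,
\begin{align*}
\sum_{j=1}^m\|f_j-P_{V^*}f_j\|_2^2
&=\sum_{j=1}^m\|\T f_j-P_{M^*}\T f_j\|^2\\
&\leq\sum_{j=1}^m\|\T f_j-P_{\T V}\T f_j\|^2
=\sum_{j=1}^m\|f_j-P_{V}f_j\|_2^2,
\end{align*}
which is the desired optimality. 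Finally, writing $\phi_j:=\T^{-1}\Phi_j$ gives $V^*=S_H(\phi_1,\dots,\phi_\ell)$, and since $\{\Phi_1,\dots,\Phi_\ell\}$ is a uniform Parseval frame for the range function of $M^*$ by Theorem \ref{sol-problema2}, the frame-transference result \cite[Theorem 4.1]{CP10} yields that $\{T_h\phi_j:\,h\in H,\,1\le j\le\ell\}$ is a Parseval frame for $V^*$.

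I expect the only genuine obstacle to be the verification of the class equivalence $V\in\calV_{\Gamma,\ell}\Leftrightarrow \T V\in\mathcal{C}_{\{\calH_\sigma\},\ell}$, and in particular the finiteness of $\calN$ (hence $\kappa\in\N$), which is exactly what is needed to place the problem within the scope of Section \ref{sec:approximation-problems} where $\kappa$ is assumed finite; this is supplied by the discreteness of $\Gamma$ together with \cite[Remark 2.2]{CP12}. Everything else reduces to Lemma \ref{lem:V-M-extra}, Theorem \ref{sol-problema2}, and the isometry property of $\T$, so the proof is short enough that the authors elected to omit it.
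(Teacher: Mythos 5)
Your proposal is correct and is precisely the argument the paper intends: the authors omit the proof, stating only that it follows from Theorem \ref{sol-problema2} and Lemma \ref{lem:V-M-extra} along the lines of the proof of Theorem \ref{thm:optimal-SIS}, and your write-up fills in exactly those steps, including the two points that genuinely need checking (the class equivalence $V\in\calV_{\Gamma,\ell}\Leftrightarrow\T V\in\mathcal{C}_{\{\calH_\sigma\}_{\sigma\in\calN},\ell}$ and the finiteness of $\calN$ from the discreteness of $\Gamma$ via \cite[Remark 2.2]{CP12}, which places the problem within the finite-$\kappa$ hypothesis of Section \ref{sec:approximation-problems}). The transference of the uniform Parseval frame via \cite[Theorem 4.1]{CP10} likewise matches the paper's treatment in Theorem \ref{thm:optimal-SIS}, so there is nothing to add.
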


\begin{remark}
 Theorems \ref{thm:optimal-SIS} and \ref{thm:optimal-SIS-with-extra-inv} provide extensions to the setting of LCA groups of \cite[Theorem 2.1]{ACHM07} and \cite[Theorem 4.1]{CM15} respectively. 
\end{remark}



\section{Totally decomposable MI spaces and translation-invariant spaces }\label{sec:PW-MI-spaces}

In this section we shall work in the setting of Section \ref{sec:shift-invariant} where $G$ is a second countable  LCA group and we consider $H$-invariant spaces of $L^2(G)$ where $H$ is a uniform lattice on $G$. 
Our main result here relates {\it translation-invariant spaces}, that are  closed subspaces of $L^2(G)$ invariant under any translation on $G$, with {\it totally} decomposable MI spaces (see Definition \ref{def:decomposable-MI}). 
For this, we need to consider translation-invariant spaces. In particular we shall prove a result which describes the measurable subset of $\wh G$ associated to a translation-invariant space through the so-called Wiener Theorem.

We begin by proving a version of Wiener's Theorem in the setting of LCA groups. A version for $G=\R^d$  was proven in \cite{Rud66}, with a beautiful proof which uses elementary theory. 
Although  one can straightforwardly adapt it to the setting of LCA groups, we provide it here for the reader's convenience.   For a different proof in terms of range function we refer to \cite[Corollary 3.9]{BR14}.

\begin{proposition}\label{prop:wiener}
 Let $V\subseteq L^2(G)$ be a closed subspace. Then, the following are equivalent:
 \begin{enumerate}
  \item [(i)] $V$ is translation invariant. 
  \item [(ii)] There exists a measurable set $E\subseteq \wh G$ such that
  $V=\{f\in L^2(G)\colon \wh f=0 \textrm{ a.e. } E\}$.
 \end{enumerate}
\end{proposition}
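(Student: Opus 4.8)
The plan is to move the whole problem to the Fourier side, where translation invariance becomes invariance under multiplication by characters, and then to run an elementary argument based on Fourier uniqueness. Since $G$ is second countable, $\wh G$ is $\sigma$-compact and hence $m_{\wh G}$ is $\sigma$-finite. The Plancherel transform $f\mapsto \wh f$ is a unitary from $L^2(G)$ onto $L^2(\wh G)$ satisfying $\wh{T_y f}=e_{-y}\wh f$, and since each character has modulus one, $V$ is translation invariant if and only if the closed subspace $W:=\wh V\subseteq L^2(\wh G)$ satisfies $e_x W=W$ for every $x\in G$ (the inclusion $e_x W\subseteq W$ upgrades to equality because $M_{e_x}$ is unitary). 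By Pontryagin duality the family $\{e_x:x\in G\}$ is exactly the set of all characters of $\wh G$. The implication (ii)$\Rightarrow$(i) is then immediate: if $\wh f$ vanishes a.e. on $E$, so does $e_{-y}\wh f=\wh{T_y f}$, whence $T_y f\in V$. So the content lies in the converse.

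For (i)$\Rightarrow$(ii) I would produce the set $E$ explicitly. Using $\sigma$-finiteness, fix $w\in L^2(\wh G)$ with $w>0$ a.e., and let $P$ be the orthogonal projection onto $W$. Since $M_{e_x}$ is unitary and preserves $W$, it also preserves $W^\perp$, so $P$ commutes with every $M_{e_x}$. Write $u:=Pw\in W$ and $v:=w-u\in W^\perp$. For each $x$ one has $e_x u\in W$, hence $\langle e_x u,v\rangle=0$; that is, the function $x\mapsto \int_{\wh G}(x,\g)\,u(\g)\overline{v(\g)}\,dm_{\wh G}(\g)$ vanishes identically on $G\cong\wh{\wh G}$. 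Since $u\bar v\in L^1(\wh G)$, Fourier uniqueness on the group $\wh G$ forces $u\bar v=0$ a.e. Because $w$ is real and positive, expanding $u\bar v=uw-|u|^2=0$ shows that $u$ is real, nonnegative and satisfies $u(w-u)=0$ a.e.; thus $u$ takes only the values $0$ and $w$, and setting $A:=\{\g:u(\g)=w(\g)\neq 0\}$ gives $u=\chi_A w$ and $v=\chi_{A^c}w$.

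It then remains to identify $W$ with $W_A:=\{g\in L^2(\wh G):g=0 \text{ a.e. on } A^c\}$. For $W\subseteq W_A$, I would use that $v=\chi_{A^c}w\in W^\perp$, so $\langle e_x g,\chi_{A^c}w\rangle=0$ for every $g\in W$ and every $x$; applying Fourier uniqueness to $g\,\chi_{A^c}\,w\in L^1(\wh G)$ and using $w>0$ yields $g=0$ a.e. on $A^c$. For the reverse inclusion it suffices to check $W_A\cap W^\perp=\{0\}$: if $h\in W^\perp$ is supported in $A$, then $\langle e_x(\chi_A w),h\rangle=0$ (as $\chi_A w=Pw\in W$), which reduces to the vanishing Fourier transform of $w\bar h$ and forces $h=0$. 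Hence $W=W_A$, and pulling back through the Plancherel transform gives $V=\{f\in L^2(G):\wh f=0 \text{ a.e. on } A^c\}$, so $E:=\wh G\setminus A$ works. The one genuine obstacle is the repeated passage ``vanishing of all the twisted integrals $\langle e_x\,\cdot\,,\,\cdot\,\rangle$ implies the integrand is null''; this is precisely the $L^1$ Fourier uniqueness theorem on $\wh G$, and everything else is bookkeeping once the strictly positive weight $w$ is in hand.
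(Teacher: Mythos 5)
Your proof is correct and takes essentially the same route as the paper's: both pass to the Fourier side, fix a strictly positive function in $L^2(\wh G)$ (the paper constructs one in its Appendix), and repeatedly apply $L^1(\wh G)$ Fourier uniqueness to the coefficients $\langle e_x\,\cdot\,,\cdot\,\rangle$, exactly as in Rudin's argument that the paper adapts. The only cosmetic difference is bookkeeping: the paper first derives the identity $f\overline{Pg}=P f\,\overline{g}$ for all $f,g$ and concludes that $P$ is multiplication by the idempotent $\phi=\overline{Pg_0}/g_0$, whereas you decompose only the one positive vector $w$ as $\chi_A w+\chi_{A^c}w$ and then establish $\wh V=\{g\in L^2(\wh G)\colon g=0 \textrm{ a.e. on } A^c\}$ by two further applications of the same uniqueness lemma.
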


\begin{proof}
 $(ii)\Rightarrow(i)$. This is a straightforward consequence of the fact that $ \wh{(T_xf)}=e_{-x}\wh f$ for every $f\in L^2(G)$ and every $x\in G$.
 
 $(i)\Rightarrow(ii)$. Let $P$ be the orthogonal projection of $L^2(\wh G)$ onto $\wh V=\{\wh f\,:\, f\in V\}$. Then, for $f,g\in L^2(\wh G)$ we have that $\langle f-Pf,Pg\rangle=0$. Since $V$ is translation invariant, for every $x\in G$ we have
$$
0=\langle f-Pf,Pg e_{x}\rangle=\int_{\wh G}(f-Pf)(\g)\overline{Pg(\g)} e_{-x}(\g)\,dm_{\wh G}
=((f-Pf)\overline{Pg})^\wedge(x),
$$
thus, $(f-Pf)\overline{Pg}=0$ which says that $f\overline{Pg}=Pf\overline{Pg}$ for all $f,g\in L^2(\wh G)$.
Swapping the roles of $f$ and $g$ in the above equalities, we also obtain that $g\overline{Pf}=Pg\overline{Pf}$ for all $f,g\in L^2(\wh G)$. Therefore, 
\begin{equation}\label{eq:f-P}
f\overline{Pg}=Pf\overline{g}\,\,\textrm{  for all }\,\,f,g\in L^2(\wh G). 
\end{equation}

Now, let $g_0\in L^2(\wh G)$  be such that $g_0>0$ a.e. $\wh G$ (see Proposition \ref{prop:not-null} in the Appendix) and call $Z$ the exceptional set of zero measure. Then, for $\phi:=\overline{Pg_0}/{g_0}$ defined a.e. on $\wh G\setminus Z$, by \eqref{eq:f-P} we have $Pf=\phi f$ for every $f\in L^2(\wh G)$.
Since $P$ is an orthogonal projection
$$\phi f=Pf=P^2f=\phi^2 f\textrm{  for all }\,\,f\in L^2(\wh G),$$
and choosing $f=g_0$ we can conclude that $\phi^2=\phi$ a.e. $\wh G\setminus Z$. This implies that $\phi$ takes that values $0$ and $1$ a.e. $\wh G\setminus Z$. 

Let $E\subseteq\wh G\setminus Z$ be the set on which $\phi=0$ a.e. Then, $f\in \wh V$ if and only if $f=Pf=\phi f$ and the latter holds if and only if $f=0$ a.e. $E$.
\end{proof}

Due to the separability of $L^2(G)$, for every translation-invariant space $V$, which in particular is an $H$-invariant space,  there exists an at most countable set $\A\subseteq L^2(G)$ such that $V=S_H(\A)$. Using this description   of $V$, we can describe the set $E$ of Proposition \ref{prop:wiener} associated to $V$ in terms of  its generators as an $H$-invariant space.

\begin{corollary}\label{cor:ti-ppal}
 Let  $V\subseteq L^2(G)$ be a translation-invariant space and $\mathcal{A}\subseteq L^2(G)$  an at most countable set such that $V=S_H(\mathcal{A})$. 
Then $V=\{f\in L^2(G)\,:\, \wh f=0 \textrm{ a.e. } E\}$ with 
 $ E=\bigcap_{\phi\in\mathcal{A}} \{\widehat{\phi}=0\}$ up to a measure zero set.
\end{corollary}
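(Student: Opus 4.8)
The plan is to identify the set $E'$ furnished abstractly by Proposition \ref{prop:wiener} with the explicit set $E=\bigcap_{\phi\in\mathcal{A}}\{\wh\phi=0\}$, up to a null set. Since $V$ is translation invariant, Proposition \ref{prop:wiener} provides a measurable $E'\subseteq\wh G$ with $V=\{f\in L^2(G):\wh f=0\text{ a.e. }E'\}$. Setting $W:=\{f\in L^2(G):\wh f=0\text{ a.e. }E\}$, it then suffices to prove $V=W$, which amounts to showing $E=E'$ modulo measure zero.

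For the inclusion $V\subseteq W$ I would argue on the generators. By definition $E\subseteq\{\wh\phi=0\}$ for every $\phi\in\mathcal{A}$, and since $\wh{T_h\phi}=e_{-h}\wh\phi$ for all $h\in H$, each translate $T_h\phi$ satisfies $\wh{T_h\phi}=0$ a.e. on $E$, so $T_h\phi\in W$. As $W$ is a closed subspace and $V=S_H(\mathcal{A})$ is the closed span of these translates, we obtain $V\subseteq W$.

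For the reverse inclusion $W\subseteq V$ I would first establish $E'\subseteq E$ up to a null set. Each $\phi\in\mathcal{A}$ lies in $V$ (take $h=0$), so by the characterization of $V$ we have $\wh\phi=0$ a.e. on $E'$; hence $E'\subseteq\{\wh\phi=0\}$ up to measure zero. Intersecting over the countable family $\mathcal{A}$ preserves this and yields $E'\subseteq E$ up to measure zero. Consequently, if $f\in W$ then $\wh f=0$ a.e. on $E$, hence a.e. on the essentially smaller set $E'$, so $f\in V$ by Proposition \ref{prop:wiener}. This gives $W\subseteq V$, and therefore $V=W=\{f:\wh f=0\text{ a.e. }E\}$ with $E$ as claimed.

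The only delicate point is the measure-theoretic bookkeeping: all set equalities hold only modulo null sets, and it is precisely the countability of $\mathcal{A}$ that guarantees $\bigcap_{\phi\in\mathcal{A}}\{\wh\phi=0\}$ remains measurable and that the a.e.\ inclusions $E'\subseteq\{\wh\phi=0\}$ survive passage to the intersection. Beyond this, the argument needs nothing more than Proposition \ref{prop:wiener} and the translation rule $\wh{T_h\phi}=e_{-h}\wh\phi$ for the Fourier transform.
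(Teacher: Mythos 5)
Your proposal is correct and follows essentially the same route as the paper's proof: invoke Proposition \ref{prop:wiener} to get an abstract set $\wt E$, show $V\subseteq W$ by checking the generators and their translates lie in $W$ (the paper phrases this via translation invariance of $W$, you via $\wh{T_h\phi}=e_{-h}\wh\phi$, which is the same fact), and show $W\subseteq V$ by deducing $\wt E\subseteq\{\wh\phi=0\}$ for each $\phi\in\mathcal{A}$ and intersecting over the countable family. Your explicit attention to the a.e.\ bookkeeping and the role of countability is a welcome refinement the paper leaves implicit.
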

\begin{proof}
By Proposition \ref{prop:wiener}, there exists a measurable set $\wt E\subseteq \wh G$ such that  $V=\{f\in L^2(G)\,:\, \wh f=0 \textrm{ a.e. } \wt E\}$. 
Let us call 
$W=\{f\in L^2(G)\,:\, \wh f=0 \textrm{ a.e. } E\}$, where $E=\bigcap_{\phi\in\mathcal{A}} \{\widehat{\phi}=0\}.$ We will prove
that  $V=W$ (or equivalently that $\wt E = E$ up to a measure zero set).
Note that $V\subseteq W$ if and only if $E \subseteq \wt E$.

First,  since by definition of $E$, $E \subseteq \{\widehat\phi = 0\}$  for each $\phi \in \A $,  we have  $\A\subseteq W$. Thus, $\mbox{span}\{T_{h}\phi:  \phi \in \A,\;  h \in H\}\subseteq W$ because $W$ is translation invariant. Finally, taking closure we find that $V=S_H(\A)\subseteq \overline{W}=W$.

For the other inclusion, we see that since $\phi \in\A\subseteq  V$, then $\widehat\phi$  must be zero a.e. in $\wt E$.
Thus $\wt E \subseteq  \{\widehat{\phi}=0\}$ for every $\phi \in \A$ and then $\wt E \subseteq \bigcap_{\phi\in\mathcal{A}} \{\widehat{\phi}=0\} = E.$(i.e. $W\subseteq V.)$
\end{proof}

As the previous results show, translation-invariant  spaces are in correspondence with measurable sets of $\wh G$. One can also construct a translation-invariant space  by imposing an additional condition on its generators as an $H$-invariant space, as we shall see in the next proposition.
For this,  again as in Section \ref{sec:shift-invariant}, let $\W$ be a measurable section of the quotient $\wh G/H^*.$ The space $L^p(\W)$ is identified with $\{g\in L^p(\wh G)\colon g=0 \textrm{ a.e. } \W^c\}$, $1\leq p\leq +\infty$. In this situation, the set $\{\chi_\W e_h\}_{h\in H}$ defined by  the characters induced by  $H$,  forms  an orthogonal basis of $L^2(\W)$ (see \cite[Proposition 2.16.]{CP10}).

\begin{proposition}\label{prop:wiener-ppal}
Let $\mathcal{A} \subseteq L^2(G)$ be an at most countable set such that $\widehat{\phi}\in L^2(\Omega)$ $\forall \,\phi\in\mathcal{A},$ 
and consider the space $V=S_{H}(\mathcal{A})$. Then, $V$ is translation invariant. 
In particular, $V=\{f\in L^2(G):\,\widehat{f}=0\textrm{ a.e. }E\}$ with 
$ E=\bigcap_{\phi\in\mathcal{A}} \{\widehat{\phi}=0\}\subseteq \W$.
\end{proposition}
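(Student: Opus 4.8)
The plan is to first establish that $V$ is translation invariant and then read off the description of $E$ from Corollary \ref{cor:ti-ppal}. The whole matter reduces to a single claim: that $T_y\phi\in V$ for every $y\in G$ and every $\phi\in\A$. Granting this, translation invariance follows by a routine density argument: if $f\in V$, pick finite combinations $f_n=\sum_{h,\phi}c^{(n)}_{h,\phi}T_h\phi$ with $f_n\to f$; since each $T_x$ is unitary and $T_xT_h=T_{x+h}$, we have $T_xf_n=\sum_{h,\phi}c^{(n)}_{h,\phi}T_{x+h}\phi\in V$ by the claim, and closedness of $V$ gives $T_xf=\lim_n T_xf_n\in V$.

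To prove the claim I would pass to the Fourier side. By Proposition \ref{prop:fiber}, $\widehat{T_h\phi}=e_{-h}\widehat\phi$, and since $\widehat\phi\in L^2(\W)$ and $|e_{-h}|\equiv 1$, every generator has Fourier transform supported in $\W$; hence $\widehat V\subseteq L^2(\W)$. The claim $T_y\phi\in V$ is thus equivalent to $e_{-y}\widehat\phi\in\widehat V$, and since $e_{-y}\widehat\phi\in L^2(\W)$ it suffices to show that $e_{-y}\widehat\phi$ belongs to the principal subspace $W_\phi:=\overline{\mbox{span}}\{e_{-h}\widehat\phi:h\in H\}$. I would compute $W_\phi^{\perp}$ inside $L^2(\W)$: a function $g\in L^2(\W)$ satisfies $g\perp e_{-h}\widehat\phi$ for all $h\in H$ exactly when $\int_{\W}g\,\overline{\widehat\phi}\,e_h\,dm_{\wh G}=0$ for every $h\in H$ (using $\overline{e_{-h}}=e_h$). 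As $g\,\overline{\widehat\phi}\in L^1(\W)$ and $\{e_h\chi_\W\}_{h\in H}$ is the determining set for $L^1(\W)$ underlying the fiberization (\cite[Corollary 3.6]{BR14}, cf. Definition \ref{def:determining-set}), this forces $g\,\overline{\widehat\phi}=0$ a.e., i.e. $g=0$ a.e. on $\{\widehat\phi\neq0\}$. Therefore $W_\phi^{\perp}=\{g\in L^2(\W):g=0\textrm{ a.e. on }\{\widehat\phi\neq0\}\}$ and, taking orthogonal complements, $W_\phi=\{g\in L^2(\W):g=0\textrm{ a.e. on }\{\widehat\phi=0\}\}$. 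Since $e_{-y}\widehat\phi$ vanishes wherever $\widehat\phi$ does, it lies in $W_\phi\subseteq\widehat V$, which proves the claim.

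Once translation invariance is established, Proposition \ref{prop:wiener} produces a measurable $E\subseteq\wh G$ with $V=\{f\in L^2(G):\widehat f=0\textrm{ a.e. }E\}$, and Corollary \ref{cor:ti-ppal} identifies $E=\bigcap_{\phi\in\A}\{\widehat\phi=0\}$ up to a null set; the inclusion $\widehat V\subseteq L^2(\W)$ noted above records that these functions also vanish off $\W$. The main obstacle is the Fourier-side computation of $W_\phi$: the point is to turn the orthogonality conditions into the vanishing of a single $L^1(\W)$ function and then invoke the determining-set property, which is the feature of this setting that takes the place of the periodization and Stone--Weierstrass arguments used for $G=\R^d$.
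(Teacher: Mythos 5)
Your proof is correct, but it takes a genuinely different route from the paper's. The paper proves translation invariance of $\wh V$ all at once: it invokes \cite[Lemma 4.4]{CP10} to produce trigonometric polynomials $p_n$ (finite combinations of the characters $e_h\chi_\W$, $h\in H$) with $p_n\to e_{-x}$ a.e.\ on $\W$ and $\|p_n\|_\infty\le C$, and then uses dominated convergence to get $p_n g\to e_{-x}g$ in $L^2(\W)$ for every $g\in\wh V$, so that $e_{-x}\wh V\subseteq\wh V$ directly; the set $E$ is then read off from Corollary \ref{cor:ti-ppal}, exactly as you do. You instead work generator by generator and replace the approximation lemma by a duality computation: using that $\{e_h\chi_\W\}_{h\in H}$ is a determining set for $L^1(\W)$, you identify the principal space $W_\phi=\overline{\mbox{span}}\{e_{-h}\wh\phi:h\in H\}$ exactly as $\{g\in L^2(\W):\,g=0\textrm{ a.e.\ on }\{\wh\phi=0\}\}$, from which $e_{-y}\wh\phi\in W_\phi\subseteq\wh V$ is immediate, and a routine density argument finishes. (One small unstated step: the reverse inclusion $\{g:\,g=0\textrm{ a.e.\ on }\{\wh\phi\neq0\}\}\subseteq W_\phi^{\perp}$ is trivial, and $W_\phi=(W_\phi^{\perp})^{\perp}$ since $W_\phi$ is closed --- worth a line.) The trade-off: the paper's argument is shorter once \cite[Lemma 4.4]{CP10} is available and handles all of $\wh V$ uniformly, whereas yours avoids that nontrivial approximation lemma, relying only on the determining-set property already invoked in Section \ref{sec:shift-invariant} via \cite[Corollary 3.6]{BR14}, and it yields strictly more: an explicit Wiener-type description of each principal subspace $W_\phi$, from which the identification $E=\bigcap_{\phi\in\A}\{\wh\phi=0\}$ could in fact be deduced directly, making the appeal to Proposition \ref{prop:wiener} and Corollary \ref{cor:ti-ppal} almost a formality in the principal case.
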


\begin{proof}
Let $x\in G$. Then, by \cite[Lemma 4.4]{CP10}, there  exists a sequence of trigonometric polynomials $\{p_n\}_{n\in \N}$ such that $p_n(\w)\to e_{-x}(\w)$ a.e. $\w\in \W$ with $\|p_n\|_{\infty}\leq C$ for every $n\in\N$.
 Now, for $g\in \widehat{V}$, since $|p_n-e_{-x}|^2|g|^2\leq (C+1)^2|g|^2$ on $\W$, by Dominated Convergence Theorem, we have that $p_ng\to e_{-x}g$ in $L^2(\W)$.
 Therefore, since $p_ng\in \widehat{V}$ for all $n\in\N$, we can conclude that $e_{-x}g\in \widehat{V}$. 
 
As a consequence, $V$ is translation invariant and by Corollary \ref{cor:ti-ppal} the result follows.
\end{proof}
 
We now state the main result of this section, where we characterize translation-invariant spaces in terms of MI spaces. We shall show that translation-invariant spaces are associated to MI spaces that are  totally  decomposable. 

\begin{theorem}
 Let $V\subseteq L^2(G)$ be an $H$-invariant space.  Let $\T:L^2(G)\to L^2(\W, \ell^2(H^*))$ be the fiberization mapping of Proposition \ref{prop:fiber}, 
and $\{\delta_k\}_{k\in H^*}$ be the canonical basis of $\ell^2(H^{*}).$ Then 
 the following conditions are equivalent:
 \begin{enumerate}
   \item [(i)] $M:=\T V$ is totally decomposable  with respect to $\{\mbox{span}\{\delta_k\}\}_{k\in H^*}$.
    \item [(ii)] $V$ is a translation-invariant space.
 \end{enumerate}
\end{theorem}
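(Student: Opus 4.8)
The plan is to exploit the dictionary already built in the paper: the fiberization map $\T$ of Proposition \ref{prop:fiber} turns $H$-invariance into $D$-multiplicative invariance (with $D=\{e_h\chi_\W\}_{h\in H}$), and we have characterizations of translation-invariant spaces (Proposition \ref{prop:wiener}, Corollary \ref{cor:ti-ppal}) on one side and of totally decomposable MI spaces (Definition \ref{def:decomposable-MI}, Proposition \ref{lem:extra-invariance-range-function}) on the other. The key observation to make precise is that the orthonormal basis $\{\delta_k\}_{k\in H^*}$ of $\ell^2(H^*)$ is indexed by $H^*$, and under $\T$ the $k$-th coordinate of $\T f(\w)$ is $\wh f(\w+k)$. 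So ``total decomposability with respect to $\{\mathrm{span}\{\delta_k\}\}_{k\in H^*}$'' is precisely the statement that whenever a function lies in $M$, each of its single-frequency-coset slices $\wh f(\cdot+k)$, viewed again as an element of $L^2(\W,\ell^2(H^*))$, also lies in $M$. I would first record this identification explicitly, since it converts the abstract decomposition of the Hilbert space $\ell^2(H^*)$ into concrete manipulations of Fourier coefficients on the tiles $\W+k$.

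For the implication (ii)$\Rightarrow$(i), I would argue as follows. Suppose $V$ is translation invariant, so by Proposition \ref{prop:wiener} there is a measurable set $E\subseteq\wh G$ with $V=\{f:\wh f=0\text{ a.e. }E\}$. Given $f\in V$ and a fixed $k\in H^*$, I want the ``slice'' $f_k$ defined by $\wh{f_k}=\chi_{\W+k}\,\wh f$ to lie in $V$ as well; since $\wh{f_k}$ vanishes wherever $\wh f$ does, in particular a.e. on $E$, this is immediate from the Wiener description of $V$. Under $\T$ this slicing is exactly the coordinate projection $\calP_k$ onto $\mathrm{span}\{\delta_k\}$ (more precisely onto $L^2(\W,\mathrm{span}\{\delta_k\})$), so $\calP_k(\T f)=\T f_k\in M$ for every $f\in V$ and every $k$. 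By Definition \ref{def:decomposable-MI} this is exactly total decomposability of $M$. The only care needed is the bookkeeping between the multiplication-by-$\chi_{\W+k}$ operation on $\wh f\in L^2(\wh G)$ and the coordinate projection on $L^2(\W,\ell^2(H^*))$, which is routine given the formula $\T f(\w)=\{\wh f(\w+\delta)\}_{\delta\in H^*}$.

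For the converse (i)$\Rightarrow$(ii), I would show that total decomposability forces invariance under \emph{all} translations $T_x$, $x\in G$, not just $x\in H$. The natural route is through the range function: let $J$ be the range function of $M$. Total decomposability means, by Proposition \ref{lem:extra-invariance-range-function}, that $P_{\mathrm{span}\{\delta_k\}}(J(\w))\subseteq J(\w)$ for every $k\in H^*$ and a.e. $\w$; equivalently each $J(\w)$ is a coordinate subspace of $\ell^2(H^*)$, i.e. $J(\w)=\overline{\mathrm{span}}\{\delta_k:k\in S(\w)\}$ for some measurable set-valued assignment $\w\mapsto S(\w)\subseteq H^*$. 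A coordinate subspace is invariant under multiplication by \emph{any} bounded sequence, in particular by the sequences $\{(x,\w+k)\}_{k}$ coming from an arbitrary character $e_x$, $x\in G$ — this is the fiber-level manifestation of $\T T_x f(\w)=\{(-x,\w+\delta)\wh f(\w+\delta)\}_\delta$. Hence $\T f\in M \Rightarrow \T(T_x f)\in M$ for every $x\in G$, so $V$ is translation invariant. Concretely I would assemble $E:=\wh G\setminus\{\w+k: k\in S(\w),\ \w\in\W\}$ (the union of the ``off'' tiles) and verify $V=\{f:\wh f=0\text{ a.e. }E\}$, then invoke Proposition \ref{prop:wiener}.

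The main obstacle is the converse direction, and specifically the step identifying total decomposability of $M$ with $J(\w)$ being a \emph{coordinate} subspace for a.e. $\w$, together with the measurable selection of the index set $S(\w)$. One must be careful that $P_{\mathrm{span}\{\delta_k\}}J(\w)\subseteq J(\w)$ for every $k$ genuinely implies $J(\w)=\bigoplus_{k\in S(\w)}\mathrm{span}\{\delta_k\}$ rather than merely being invariant under each $\delta_k$-projection in some weaker sense; this is true for closed subspaces of a Hilbert space with a fixed orthonormal basis (invariance under every coordinate projection forces the subspace to be the closed span of those $\delta_k$ it contains), but it deserves an explicit line. The measurability of $\w\mapsto S(\w)$, and hence measurability of $E$, follows from the measurability of the range function $J$ (each $\langle P_{J(\w)}\delta_k,\delta_k\rangle$ is measurable and $k\in S(\w)$ iff this equals $1$), so once the coordinate-subspace structure is established the passage to a Wiener set $E$ is clean. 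The forward direction and all the Fourier-coordinate bookkeeping are straightforward given the results already proved.
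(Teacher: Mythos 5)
Your proposal is correct, and while your (ii)$\Rightarrow$(i) coincides with the paper's argument (slice $\wh f_k=\chi_{\W+k}\wh f$, observe the slice still vanishes a.e.\ on the Wiener set $E$, so $f_k\in V$ and $\calP_k(\T f)=\T f_k\in M$), your converse (i)$\Rightarrow$(ii) takes a genuinely different route. The paper decomposes $V=\bigoplus_{k\in H^*}V_k$ with $\T V_k=M_k$, identifies $V_k=\{f_k:\,f\in V\}$ as an $H$-invariant space whose elements have spectrum in the tile $\W+k$, and then invokes Proposition \ref{prop:wiener-ppal} --- whose proof rests on approximating an arbitrary character $e_{-x}$ boundedly a.e.\ on $\W$ by $H$-trigonometric polynomials (\cite[Lemma 4.4]{CP10}) together with dominated convergence --- to conclude that each $V_k$, and hence $V$, is translation invariant. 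You instead argue at the fiber level: invariance of $J(\w)$ under every coordinate projection forces $J(\w)=\overline{\mathrm{span}}\{\delta_k:\,k\in S(\w)\}$ (your observation is exactly right: $P_{\mathrm{span}\{\delta_k\}}v=v_k\delta_k\in J(\w)$ with $v_k\neq0$ puts $\delta_k$ itself in $J(\w)$, so every element of $J(\w)$ is supported in $S(\w)=\{k:\,\delta_k\in J(\w)\}$), and such a coordinate subspace is stable under the unimodular diagonal action $\{(-x,\w+\delta)\}_{\delta\in H^*}$ of an arbitrary $x\in G$; since $\T T_xf(\w)=\{(-x,\w+\delta)\wh f(\w+\delta)\}_{\delta}$ (this follows directly from $\wh{T_xf}=e_{-x}\wh f$ and the definition of $\T$, as Proposition \ref{prop:fiber} only records the case $x\in H$), Theorem \ref{thm:MI-range-function} yields $T_xf\in V$ for all $x\in G$. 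This bypasses Proposition \ref{prop:wiener-ppal} and the trigonometric-approximation lemma entirely, and makes transparent that total decomposability is precisely fiberwise invariance under all modulations; what it costs is the coordinate-subspace lemma just described and the measurability of $\w\mapsto S(\w)$, both of which you flag and handle correctly (the latter via $\langle P_{J(\w)}\delta_k,\delta_k\rangle$). Two minor points: Proposition \ref{lem:extra-invariance-range-function} is stated only for finite $\kappa$, so for the countable decomposition $\{\mathrm{span}\{\delta_k\}\}_{k\in H^*}$ you should cite Proposition \ref{prop:extra-invariance}(iii) directly, from which the direction you need holds verbatim (a single null set suffices since $H^*$ is countable); and your closing construction of the set $E$ is superfluous --- once $T_xf\in V$ for all $x\in G$ you are done, with Proposition \ref{prop:wiener} available afterwards if the spectral-set description is wanted.
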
 

\begin{proof}
 $(i)\Rightarrow(ii)$.
We know that $M_k\subseteq M$ for every $k\in H^*$ where $M_k=\calP_kM$ and $\calP_k(F)(\w)=P_{\textrm{span}\{\delta_k\}}(F(\w))$ for a.e. $\w\in\W$.
Since, $M=\bigoplus_{k\in H^*}M_k$, using that $\T$ is an isometric isomorphism and denoting $\T V_k=M_k$, we immediately have $V=\bigoplus_{k\in H^*}V_k$. Therefore,  
if we prove that each $V_k$ is translation invariant, we will be able to conclude that so is $V$.

Fix $k\in H^*$. 
Denote by $\W_k$ the translation by $k$ of $\W$, i.e. $\W_k=\W+k$. For $f\in V$, $f_k$ is the function defined by its Fourier transform as $\wh f_k:=\chi_{\W_k}\wh f$.
If $F\in M$ is $F=\T f$, then $F_k(\w)=\wh f(\w+k)\delta_k=\T f_k(\w)$ a.e. $\w\in\W$ and  we have that  $\wh f_k=\wh{(\T^{-1}F_k)}\in \wh{(\T^{-1}M_k)}=\wh V_k$. 
On the other hand, if $f\in V_k$, $F=\T f\in M_k$ and then $F_k=F$. Thus, $f_k=f$ and therefore $V_k=\{f_k:\,\, f\in V\}$.
Furthermore, since by Proposition \ref{prop:extra-invariance}, $M_k$ is a D-MI space with $D=\{e_h\chi_\W\}_{h\in H}$,  $V_k$ is $H$-invariant.
In particular, we obtain that for every $f\in V$, $W_k:=\overline{\mbox{span} }\{e_h\wh f_k:\,h\in H\}\subseteq \wh V_k$. 

Now, by Proposition \ref{prop:wiener-ppal} $(W_k)^{\vee}$ is translation invariant. Hence, for $x\in G$ and $f\in V$, $T_xf_k\in (W_k)^{\vee}\subseteq V_k$. Thus, $V_k$ is translation invariant and so is $V$.

 $(ii)\Rightarrow(i)$. We need to show that $M_k\subseteq M$ for all $k\in H^*$ which is equivalent to see that  $V_k\subseteq V$ for all $k\in H^*$. 
 
 Thus, let us fix $k\in H^*$. Since $V$ is translation-invariant, by Proposition \ref{prop:wiener} there exists $E\subseteq \wh G$ measurable such that $V=\{f\in L^2(G)\,:\, \wh f=0 \textrm{ a.e. } E\}$. Then, if $f\in V$, $\wh f_k=\chi_{\W_k}\wh f$ satisfies that
 $\wh f_k=0$ a.e. $E$ which implies that $f_k\in V$. Therefore, $V_k\subseteq V$ as we wanted to prove.
\end{proof}

\medskip
\section{Acknowledgments}
 
We are indebted to the anonymous referee for his/her meticulous report and fine comments.
We would like to thank Michael Cwikel and Mario Milman for the invitation to be part of this homage.

 \medskip

\appendix
\section{}\label{sec:appendix}
In this Appendix we provide a proof for the existence of a function in $L^2(G)$ which is never zero - up to, perhaps, a zero measure  set.  We believe that  this is known, but since we did not find a precise reference, we construct it here. 

\begin{proposition}\label{prop:not-null}
Let $G$ be a second countable LCA group. Then, there exists a function $f\in L^2(G)$ such that $f>0$ a.e. on $G$.  
\end{proposition}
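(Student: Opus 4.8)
The plan is to reduce everything to the $\sigma$-finiteness of the Haar measure $m_G$ and then build $f$ by hand as a weighted sum of characteristic functions. First I would observe that since $G$ is second countable and locally compact it is Lindel\"of, and a locally compact Lindel\"of Hausdorff space is $\sigma$-compact: covering $G$ by interiors of compact neighborhoods and extracting a countable subcover yields $G=\bigcup_{n\in\N} C_n$ with each $C_n$ compact. As compact sets have finite Haar measure, this exhibits $m_G$ as $\sigma$-finite.

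Next I would replace the $C_n$ by a disjoint family: set $A_1:=C_1$ and $A_n:=C_n\setminus\bigcup_{k<n}C_k$ for $n\ge 2$. The $A_n$ are pairwise disjoint measurable sets with $m_G(A_n)<\infty$ and $\bigcup_{n\in\N}A_n=G$, so every point of $G$ lies in exactly one $A_n$.

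Then I would choose weights $c_n>0$ by $c_n^2:=2^{-n}\big(1+m_G(A_n)\big)^{-1}$ and define $f:=\sum_{n\in\N}c_n\,\chi_{A_n}$. Because the $A_n$ partition $G$, we have $f(x)=c_n>0$ for $x\in A_n$, so $f>0$ everywhere on $G$. To check $f\in L^2(G)$, note that $\|f\|_2^2=\sum_{n\in\N}c_n^2\,m_G(A_n)=\sum_{n\in\N}2^{-n}\tfrac{m_G(A_n)}{1+m_G(A_n)}\le\sum_{n\in\N}2^{-n}=1<\infty$, using $t/(1+t)\le 1$ for $t\ge 0$.

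I expect no serious obstacle here: the only point requiring care is the passage from ``second countable and locally compact'' to ``$\sigma$-finite Haar measure'', after which the construction of a strictly positive $L^2$ function on any $\sigma$-finite measure space is routine. If one prefers to avoid the topological argument, one may instead invoke directly that the Haar measure of a second countable LCA group is $\sigma$-finite and run the same weighting argument on any countable measurable partition of $G$ into finite-measure sets.
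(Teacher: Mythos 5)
Your proof is correct, and it is slightly more elementary than the paper's at the one step where they differ. Both arguments share the same skeleton: second countability gives $\sigma$-compactness, $G=\bigcup_n C_n$ is disjointified into finite-measure pieces, and $f$ is assembled as a weighted sum with geometric weights $2^{-n}$ forcing $L^2$ convergence. The difference is the building block on each piece: the paper invokes a lemma of Folland (\cite[Lemma 4.19]{Fol95}) to produce, for each compact $K_j$, a function $g_j\in L^2(G)$ with $g_j>0$ on $K_j$ and $g_j\geq 0$ on $G$, and then uses $2^{-j}\chi_{F_j}\,g_j/\|g_j\|_2$; you bypass this entirely by taking the characteristic functions $\chi_{A_n}$ themselves, which lie in $L^2(G)$ simply because Haar measure is finite on compacts, with the normalization $c_n^2=2^{-n}\bigl(1+m_G(A_n)\bigr)^{-1}$ handling the (possible) case $m_G(A_n)=0$ gracefully and yielding $f>0$ \emph{everywhere}, not merely a.e. What your route buys is generality and self-containedness: as you note, the same weighting argument produces a strictly positive $L^2$ function on any $\sigma$-finite measure space, with no reference to the group structure beyond $\sigma$-finiteness of $m_G$; what the paper's route buys is essentially nothing extra for this statement (the factor $g_j$ is immediately truncated by $\chi_{F_j}$ anyway), so your simplification is harmless. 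The only point to state carefully, and you do, is the topological step ``second countable $+$ locally compact $\Rightarrow$ $\sigma$-compact'' via the Lindel\"of property, which matches the paper's (unelaborated) appeal to $\sigma$-compactness.
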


\begin{proof}
From \cite[Lemma 4.19]{Fol95} one can conclude that for every compact set $K\subseteq G$ there exits $g\in L^2(G)$ such that $ g>0$ on $K$ and $g\geq 0$ on $G$. 

Since $G$ is second countable, in particular  it is $\sigma$-compact. Then, we have that $G=\bigcup_{ j\in\N}K_j$ where each $K_j$ is compact. We now consider the sets $\{F_j\}_{j\in\N}$ which are the disjoint versions of $\{K_j\}_{j\in\N}$, that is, $F_1:=K_1$ and $F_j:=K_j\setminus (\bigcup_{i=1}^{j-1}K_i)$ for $j\in\N$. Note that we still have $G=\bigcup_{ j\in\N}F_j$ where now the union is disjoint. 

For every $j\in\N$, let $g_j\in L^2(G)$ be such that $ g_j>0$ on $K_j$ and $g_j\geq 0$ on $G$ and consider $f_j:=(1/2^j)\chi_{F_j}({g_j}/\|g_j\|_2)$. Then, $f:=\sum_{j\in\N}f_j$ is the function we are looking for. Indeed. First note that
since $\{F_j\}_{j\in\N}$ is a partition for $G$, 
$$\|f\|_2^2=\sum_{j\in\N}\|f_j\|^2_2\leq \sum_{j\in\N}\frac1{4^j}<+\infty,$$
and therefore $f\in L^2(G)$. Finally, since $ g_j>0$ on $K_j$ for each $j\in\N$, $ f_j>0$ on $F_j$ for each $j\in\N$, and thus, $ f>0$ a.e. on $G$.  
\end{proof}

\bibliographystyle{amsplain}

\begin{thebibliography}{10}
\bibitem{ACHM07}
A.~Aldroubi, C.~Cabrelli, D.~Hardin, and U.~Molter.
\newblock {\em Optimal shift-invariant spaces and their {P}arseval frame generators.}
\newblock Appl. Comput. Harmon. Anal., 23(2):273--283, 2007.


\bibitem{ACHKM10}
A.~Aldroubi, C.~Cabrelli, C.~Heil, K.~Kornelson, and U.~Molter.
\newblock {\em Invariance of a shift-invariant space.}
\newblock J. Fourier Anal. Appl., 16(1):60--75, 2010.

\bibitem{AG01}
A.~Aldroubi and K.~Gr{\"o}chenig.
\newblock {\em Nonuniform sampling and reconstruction in shift-invariant spaces.}
\newblock SIAM Rev., 43(4):585--620 (electronic), 2001.
%
\bibitem{AKTW12}
A.~Aldroubi, I.~Krishtal, R.~Tessera, and H.~Wang.
\newblock {\em Principal shift-invariant spaces with extra invariance nearest to
 observed data.}
\newblock Collect. Math., 63(3):393--401, 2012.
%
\bibitem{AT11}
A.~Aldroubi and R.~Tessera.
\newblock {\em On the existence of optimal unions of subspaces for data modelling and
 clustering.}
\newblock Found. Comput. Math., 11(3):363--379, 2011.
%
\bibitem{ACP10}
M.~Anastasio, C.~Cabrelli, and V.~Paternostro.
\newblock {\em Extra invariance of shift-invariant spaces on {LCA} groups.}
\newblock J. Math. Anal. Appl., 370(2):530--537, 2010.
%
\bibitem{ACP11}
M.~Anastasio, C.~Cabrelli, and V.~Paternostro.
\newblock {\em Invariance of a shift-invariant space in several variables.}
\newblock Complex Anal. Oper. Theory, 5(4):1031--1050, 2011.
%
\bibitem{BHP14}
D.~Barbieri, E.~Hern{\'a}ndez, and J.~Parcet.
\newblock {\em Riesz and frame systems generated by unitary actions of discrete
  groups.}
\newblock Appl. Comput. Harmon. Anal., 39(3):369--399, 2015


\bibitem{BHP15} D.~Barbieri, E.~Hern\'andez, V.~Paternostro.
\newblock {\em The Zak transform and the structure of spaces invariant by the action of an LCA group}. 
\newblock J. Funct. Anal., 269(5):1327--1358, 2015.

\bibitem{Bow00}
M.~Bownik.
\newblock {\em The structure of shift-invariant subspaces of {$L^2({\mathbb R}^n)$}.}
\newblock J. Funct. Anal., 177(2):282--309, 2000.


\bibitem{BR14}
M.~Bownik, and K.~Ross.
\newblock {\em The structure of translation-invariant spaces on locally compact abelian groups.} 
\newblock J. Fourier Anal. Appl., 21(4):849--884, 2015.

\bibitem{CL15}
H.~Cuenya, and F.~Levis.
\newblock {\em Existence of optimal subspaces in reflexive Banach Spaces.}
\newblock  Ann. Funct. Anal. 6(2):69--77, 2015.

\bibitem{CM15}
C.~Cabrelli and C.~Mosquera.
\newblock {\em Subspaces with extra invariance nearest to observed data.}
\newblock Appl. Comput. Harmon. Anal., 41(2):660--676, 2016.

\bibitem{CP10}
C.~Cabrelli and V.~Paternostro.
\newblock {\em Shift-invariant spaces on {LCA} groups.}
\newblock J. Funct. Anal., 258(6):2034--2059, 2010.

\bibitem{CP12}
C.~Cabrelli and V.~Paternostro.
\newblock {\em Shift-modulation invariant spaces on {LCA} groups.}
\newblock Studia Math., 211(1):1--19, 2012.

\bibitem{Cas00}
P.~Casazza
\newblock {\em The art of frame theory.}
\newblock Taiwanese J. Math. 4(2):129--201, 2000.


\bibitem{Chr03}
O.~Christensen.
\newblock {\em An introduction to frames and {R}iesz bases}.
\newblock Applied and Numerical Harmonic Analysis. Birkh\"auser Boston Inc.,
  Boston, MA, 2003.

\bibitem{dBDVRI94}
C.~de~Boor, R.~A. DeVore, and A.~Ron.
\newblock {\em The structure of finitely generated shift-invariant spaces in
 {$L_2({\mathbb R}^d)$}.}
\newblock J. Funct. Anal., 119(1):37--78, 1994.
%
\bibitem{EY36}
C.~Eckart, and G.~Young.
\newblock {\em The approximation of one matrix by another of lower rank.}
\newblock Psychometrica, 1(3):211--218, 1936.


\bibitem{Fol95}
G.~B. Folland.
\newblock {\em A course in abstract harmonic analysis}.
\newblock Studies in Advanced Mathematics. CRC Press, Boca Raton, FL, 1995.

\bibitem{Gro01}
K.~Gr{\"o}chenig.
\newblock {\em Foundations of time-frequency analysis}.
\newblock Applied and Numerical Harmonic Analysis. Birkh\"auser Boston Inc.,
 Boston, MA, 2001.

 \bibitem{HS64}
Hasumi, M. and T. S.~Srinivasan.
\newblock {\em Doubly invariant subspaces {II}.}
\newblock Pacific J. Math., 14:525--535, 1964.

\bibitem{Hei11}
C.~Heil.
\newblock {\em A basis theory primer}.
\newblock Applied and Numerical Harmonic Analysis. Birkh\"auser/Springer, New
  York, expanded edition, 2011.
%
\bibitem{Hel64}
H.~Helson.
\newblock {\em Lectures on invariant subspaces}.
\newblock Academic Press, New York, 1964.

\bibitem{Hel86}
H.~Helson.
\newblock {\em The spectral theorem}.
\newblock Lecture Notes in Mathematics, 1227 Springer-Verlag, Berlin 1986.
\newblock VI+104 pp ISBN 3-540-17197-5.



\bibitem{HR79}
E.~Hewitt,  and K.A.~Ross.
\newblock {\em Abstract harmonic analysis. {V}ol. {I}}.
\newblock Springer-Verlag, Berlin-New York, second edition, 1979.

\bibitem{HSWW10}
E.~Hern{\'a}ndez, H.~{\v{S}}iki{\'c}, G.~Weiss, and E.~Wilson.
\newblock {\em Cyclic subspaces for unitary representations of {LCA} groups;
  generalized {Z}ak transform.}
\newblock Colloq. Math., 118(1):313--332, 2010.


\bibitem{HW96}
E.~Hern{\'a}ndez and G.~Weiss.
\newblock {\em A first course on wavelets}.
\newblock Studies in Advanced Mathematics. CRC Press, Boca Raton, FL, 1996.
\newblock With a foreword by Yves Meyer.


\bibitem{Ive15} J. W. Iverson, 
\newblock {\em Subspaces of $L^2(G)$ invariant under translations by an abelian subgroup}. J. Funct. Anal. 269:865-913, 2015.
\bibitem{KK98}
E.~Kaniuth, and G.~Kutyniok.
\newblock {\em Zeros of the {Z}ak transform on locally compact abelian
              groups.}
\newblock Proc. Amer. Math. Soc. , 126(12):3561--3569, 1998.

\bibitem{Mal89}
S.~G. Mallat.
\newblock {\em Multiresolution approximations and wavelet orthonormal bases of
 {$L^2({\mathbb R})$}.}
\newblock Trans. Amer. Math. Soc., 315(1):69--87, 1989.
%
\bibitem{Mac52}
G.~W. Mackey.
\newblock {\em Induced representations of locally compact groups. {I}.}
\newblock Ann. of Math. (2), 55:101--139, 1952.


\bibitem{Pat15}
V.~Paternostro.
\newblock {\em Linear combinations of generators in multiplicatively invariant spaces.}
\newblock Studia Math., 226(1):1--16, 2015.

\bibitem{Rud62}
W.~Rudin.
\newblock {\em Fourier analysis on groups}.
\newblock Interscience Tracts in Pure and Applied Mathematics, No. 12.
  Interscience Publishers (a division of John Wiley and Sons), New York-London,
  1962.

\bibitem{Rud66}
W.~Rudin.
\newblock {\em Real and complex analysis.}
\newblock McGraw-Hill Book Co., New York-Toronto, Ont.-London, 1966.
 
\bibitem{RS95}
A.~Ron and Z.~Shen.
\newblock {\em Frames and stable bases for shift-invariant subspaces of {$L_2(\mathbb
  R^d)$}.}
\newblock Canad. J. Math., 47(5):1051--1094, 1995.

\bibitem{Sch07}
E.~Schmidt.
\newblock {\em Zur {T}heorie der linearen und nicht linearen
             {I}ntegralgleichungen {Z}weite {A}bhandlung.}
\newblock Math. Ann., 64(2):161--174, 1907.

\bibitem{Sri64}
T. S.~Srinivasan.
\newblock {\em Doubly invariant subspaces.}
\newblock Pacific J. Math., 14:701--707, 1964.

\bibitem{SW11}
H.~{\v{S}}iki{\'c} and E.~N. Wilson.
\newblock {\em Lattice invariant subspaces and sampling.}
\newblock Appl. Comput. Harmon. Anal., 31(1):26--43, 2011.


\end{thebibliography}

\end{document}